\documentclass[11pt]{article}
\usepackage{amsthm}
\usepackage{fullpage}
\usepackage{amssymb, amsmath}
\usepackage{graphicx}
\usepackage{titlesec}
\usepackage{enumitem}
\usepackage{comment}
\usepackage{relsize}
\usepackage{comment}
\usepackage{xcolor}
\usepackage{mathtools}
\usepackage[font=small]{caption}
\usepackage{algorithm}
\usepackage{float}

\usepackage[backend=bibtex,giveninits=true,doi=false,url=false,isbn=false,style=trad-plain,hyperref,date=year]{biblatex}
\usepackage[hidelinks,colorlinks=true,linkcolor=blue,citecolor=blue]{hyperref}

\newtheorem{thm}{Theorem}

\newtheorem{lem}[thm]{Lemma}

\newtheorem{conj}[thm]{Conjecture}
\newtheorem{claim}{Claim}
\theoremstyle{definition}
\newtheorem{defin}[thm]{Definition}
\newtheorem{prob}[thm]{Problem}

\newtheorem{remark}[thm]{Remark}
\numberwithin{thm}{section}

\newcommand{\cU}{\mathcal{U}}
\newcommand{\cV}{\mathcal{V}}
\newcommand{\cW}{\mathcal{W}}

\newcommand{\recongraph}{\mathbf{RG}}

\newcommand{\colorfulcomplex}{\mathbf{Col}}
\newcommand*{\matchingconfig}{IMC}

\title{Tight constructions for reconfigurations of independent transversals}
\author{Ronen Wdowinski\thanks{Institute of Discrete Mathematics, Graz University of Technology, Steyergasse 30, 8010 Graz, Austria. Email: wdowinski@math.tugraz.at.}}
\date{\today}

\begin{document}

\maketitle

\begin{abstract}
For a graph $G$ and partition $\mathcal{U}$ of its vertex set, an independent transversal of $(G, \mathcal{U})$ is an independent set of $G$ that contains one vertex from each block of $\mathcal{U}$. Buys, Kang, and Ozeki studied when a reconfiguration graph on independent transversals of $(G,\mathcal{U})$ is connected, meaning any independent transversal can be transformed into any other one through a sequence of one-vertex modifications while always maintaining an independent transversal. Analogous to a theorem of Haxell, they proved that this is the case if $G$ has maximum degree $\Delta$ and each block of $\mathcal{U}$ has size at least $2\Delta$, except if the union of some $k \ge 1$ blocks of $\mathcal{U}$ induces $k$ disjoint copies of the complete bipartite graph $K_{\Delta, \Delta}$ in $G$. Solving one of their problems, we exactly characterize the partition structure in the latter exceptional instances of their theorem, showing that there is a rich variety of them but they are generated by a simple constructive procedure.
\end{abstract}

\section{Introduction}

Independent transversals in graphs are objects of great interest in many parts of combinatorics and discrete optimization, due to their applications to hypergraph matchings \cite{aharoni2000hall}, list colorings \cite{haxell2001note}, resource allocation \cite{asadpour2012santa, haxell2025improved}, and more. While previous work primarily focused on when independent transversals exist (e.g., \cite{aharoni2007independent, aharoni2015cooperative, haxell2011forming, haxell2019topological, haxell2024constructing, tang2025number}), a more recent line of inquiry by Buys, Kang, and Ozeki \cite{buys2025reconfiguration} studies when any two independent transversals can be transformed into one another through a sequence of single-vertex swaps within the space of independent transversals, i.e., when an associated reconfiguration graph is connected. These authors proved a sufficient maximum degree condition for the reconfiguration graph to be connected (Theorem \ref{thm:BKO} below) that is analogous to Haxell's theorem \cite{haxell1995condition, haxell2001note} for the existence of independent transversals. This puts independent transversals into the field of \textit{combinatorial reconfigurations}, a topic that has received significant attention and generally studies the structure and connectivity of discrete solution spaces, with applications to local search algorithms and to approximate sampling and counting schemes (see, e.g., \cite{mynhardt2019reconfiguration, nishimura2018introduction, van2013complexity} for surveys). In \cite{wdowinski2025hall}, this paper's author showed how topological methods, previously used to study the existence of independent transversals, can also be used to study their reconfigurations. 

In this paper, we solve a problem of Buys, Kang, and Ozeki \cite{buys2025reconfiguration} (Problem \ref{prob:main-problem} below) by giving an exact characterization of vertex-partitioned graphs that are extremal for their theorem, i.e., that show the tightness of their sufficient condition for a reconfiguration graph being connected (see Theorem \ref{thm:construction-universal}). Our proof method is combinatorial, building upon previous work of Haxell and the author \cite{haxell2024constructing} about vertex-partitioned graphs with no independent transversals. Comparing our work to \cite{haxell2024constructing}, this demonstrates further close ties between the topics of existence and reconfigurations of independent transversals which were exhibited in other ways in \cite{buys2025reconfiguration, wdowinski2025hall}.

Going into definitions, let $G$ be a graph and let $\cU = \{U_1, \ldots, U_m\}$ be a partition of $V(G)$. An \textit{independent transversal} (IT) of $(G,\cU)$ is an independent set $\{u_1,\ldots,u_m\}$ of $G$ that contains exactly one vertex $u_i$ from each block $U_i \in \cU$. The \textit{reconfiguration graph} $\recongraph(G,\cU)$ is the graph whose vertices are the IT's of $(G,\cU)$, with two IT's $S,T$ joined by an edge if $S \cup T$ is an independent set of size $|\cU|+1$. In other words, adjacent IT's $S, T$ in $\recongraph(G,\cU)$ agree on all but one block $U_i \in \cU$, and their vertices in $U_i$ are non-adjacent in $G$. Buys, Kang, and Ozeki \cite{buys2025reconfiguration} defined this rather restricted notion of adjacency in the reconfiguration graph for convenience in their inductive proof, but as indicated in \cite{wdowinski2025hall}, this definition is actually quite natural from a topological perspective: Viewing IT's as colorful simplices in the independence complex of $G$, they are adjacent in the reconfiguration graph precisely when they are common faces of a simplex with one more vertex.

A well-known theorem of Haxell \cite{haxell1995condition, haxell2001note} states that if $G$ has maximum degree $\Delta$ and $|U_i| \ge 2\Delta$ for all $i$, then $(G,\cU)$ always has at least one IT. Buys, Kang, and Ozeki \cite{buys2025reconfiguration} proved the following extension of Haxell's theorem to the reconfiguration setting.

\begin{thm}[Buys, Kang, Ozeki \cite{buys2025reconfiguration}] \label{thm:BKO}
Let $G$ be a graph with maximum degree $\Delta$, and let $\cU = \{U_1, \ldots, U_m\}$ be a partition of $V(G)$ such that $|U_i| \ge 2\Delta$ for all $i$. If $G[\bigcup_{i \in I} U_i]$ is not a disjoint union of $|I|$ copies of $K_{\Delta,\Delta}$, for all nonempty $I \subseteq [m]$, then $\recongraph(G, \cU)$ is connected.
\end{thm}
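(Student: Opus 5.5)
We will prove Theorem \ref{thm:BKO} by induction on the number of blocks $m$, and within that on $|V(G)|+|E(G)|$. First we reduce to the minimal setting. We may assume every block has size exactly $2\Delta$, because deleting vertices from oversized blocks is harmless. The reason is that if $S,T$ are IT's and $v\in S\cap U_i$ lies in an oversized block, we can first try to move $S$ off $v$ to another vertex of $U_i$. More robustly, we argue by minimal counterexample. Deleting an edge can only add IT's and reconfiguration edges, so it does not directly help. Instead we use the structure that a minimal counterexample must have.

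The main engine is Haxell's theorem applied to auxiliary instances. Fix two IT's $S$ and $T$; we want a path between them in $\recongraph(G,\cU)$. Pick a vertex $t\in T\cap U_1$. Delete $U_1$ together with $N(t)$, obtaining $G'$ on the blocks $U_i\setminus N(t)$, $i\ge 2$. These blocks can shrink below $2\Delta$, so we follow the Buys--Kang--Ozeki style and work with the stronger hypothesis form: blocks of size $\ge 2\Delta$, but with a fixed vertex $t$ whose neighbourhood is removed. The argument runs in three steps.

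Step one is to show that $S$ can be reconfigured to an IT $S'$ containing $t$. We run Haxell's alternating-tree augmentation argument in the reconfiguration setting: build the standard sequence of partial IT's and blocking vertices, and track that each augmenting exchange is a single-vertex swap to a non-adjacent vertex. The degree count $|U_i|\ge 2\Delta$ ensures the tree keeps growing unless it saturates with equality. Equality forces every block reached by the tree to be $2\Delta$-regularly covered by the neighbourhoods of $\Delta$ tree vertices, each dominating $\Delta$ vertices. This is exactly the structure of disjoint $K_{\Delta,\Delta}$'s on the union of the reached blocks, which is excluded.

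Step two: once both $S'$ and $T$ contain $t$, we restrict to $G-(U_1\cup N(t))$ with blocks $U_i\setminus N(t)$ and apply induction. To make the blocks large enough, we choose $t$ and the order of blocks carefully, or we strengthen the induction hypothesis to a weighted form allowing block sizes $|U_i|\ge 2\Delta - $ (degrees removed), as Haxell's theorem does via the $2\Delta$ deficiency count. A path in the smaller reconfiguration graph lifts to one in $\recongraph(G,\cU)$ by adding $t$, since all vertices avoid $N(t)$.

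The main obstacle is step one together with the non-degeneracy needed in step two. We must show that a failure of augmentation forces the exact extremal configuration, not merely some tight count. We also must show that the exceptional $K_{\Delta,\Delta}$ structure cannot reappear in the reduced instance without already being present in $G$. For the first, we will carefully analyze the equality case of the counting in Haxell's tree argument: each tree vertex has degree exactly $\Delta$ into the tree blocks, and the neighbourhoods are disjoint, which yields complete bipartite components. For the second, if it fails we would try a different choice of $t\in T$, or induct on a fixed common vertex $S\cap T$ when nonempty.
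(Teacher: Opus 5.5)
The gap that breaks the proof is in Step two. Lifting a path of the reduced instance by adding $t$ means you join $S'$ and $T$ through IT's that all contain $t$. Under the hypotheses of Theorem~\ref{thm:BKO} this is false in general. Take $\Delta=1$, $U_1=\{t,a\}$, $U_2=\{t',b\}$, $U_3=\{c,d,e\}$, with edges $tt'$, $cd$, $be$. The only unions of blocks with an even number of vertices are $U_1$, $U_2$ and $U_1\cup U_2$, which span at most one edge, so the theorem applies (and $\recongraph(G,\cU)$ is indeed connected).

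Every IT containing $t$ must contain $b$, and hence must avoid $e$. So the only IT's through $t$ are $\{t,b,c\}$ and $\{t,b,d\}$, which are not adjacent because $c\sim d$; any path between them has to remove $t$ and later put it back. With $S=\{t,b,c\}$ and $T=\{t,b,d\}$, your reduced instance has blocks $\{b\}$ and $\{c,d,e\}$, of sizes $2\Delta-1$ and $2\Delta+1$, maximum degree $1$, and a disconnected reconfiguration graph. In fact $G[\{b,c,d,e\}]$ is two disjoint copies of $K_{1,1}$ on two blocks. So deleting $U_1\cup N(t)$ creates exactly the exceptional structure that $G$ lacks, which is the non-degeneracy you hoped to rule out.

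Consequently any weighted induction hypothesis strong enough to apply here would be false. Your fallback fails as well: fixing the common vertex $b$ leaves the block $\{c,d\}$, which induces $K_{1,1}$. The same example refutes your opening reduction, since deleting $e$ from the oversized block $U_3$ creates a block inducing $K_{1,1}$. In general, blocks can shrink to $\Delta$ vertices, and you give neither a rule for choosing $t$ nor a statement that survives this shrinkage.

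Step one is also only asserted. Haxell's alternating tree enlarges a partial IT, whereas you must move a full IT by single swaps to non-adjacent vertices, and you never say what the tree is. More seriously, the equality analysis does not follow. A tight count can at best tell you that the tree vertices induce a perfect matching whose neighbourhoods, each of size exactly $\Delta$, partition blocks of size exactly $2\Delta$. That says nothing about edges among the dominated vertices, so it does not produce complete bipartite components.

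The paper does not reprove Theorem~\ref{thm:BKO}. However, the machinery of Section~\ref{sec:main-proof}, which it adapts from the combinatorial proofs of that theorem, shows the missing idea: argue globally by contradiction and never fix a vertex. If $\recongraph(G,\cU)$ were disconnected, take IT's $S,T$ in distinct components with $|I(S\cap T)|$ maximum. Grow $R$ by Algorithm~\ref{alg:growing-tuple} until it totally dominates $\bigcup_{i\in I(R)}U_i$. Then compare $|R|\le 2|I(R)|$ (every star has a leaf, by Claim~\ref{claim-1}) with $\Delta|R|\ge 2\Delta|I(R)|$. Equality forces the rigid matching structure. It is then exchange arguments exploiting the maximality of $|I(S\cap T)|$, of the kind in Claims~\ref{claim-10} and~\ref{claim-11}, that must upgrade this to disjoint copies of $K_{\Delta,\Delta}$ on $\bigcup_{i\in I(R)}U_i$, contradicting the hypothesis.
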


In particular, Theorem \ref{thm:BKO} implies that $\recongraph(G, \cU)$ is connected whenever $|U_i| \ge 2\Delta+1$ for all $i$. The proof of Theorem \ref{thm:BKO} in \cite{buys2025reconfiguration} was combinatorial, adapting ideas from \cite{graf2020finding} (related ideas also found in \cite{aharoni2007independent, haxell2006odd, haxell1995condition}), while the proof in \cite{wdowinski2025hall} was done using topological methods analogous to topological proofs of Haxell's theorem (see \cite{aharoni2007independent, aharoni2002triangulated, aharoni2015cooperative, haxell2019topological, meshulam2003domination}).

When $|U_i| = 2\Delta$ for all $i$, Theorem \ref{thm:BKO} gives only a partial characterization of when the reconfiguration graph $\recongraph(G, \cU)$ is (dis)connected: If $\recongraph(G, \cU)$ is disconnected, then Theorem \ref{thm:BKO} states that $G$ must contain a disjoint union of copies of the complete bipartite graph $K_{\Delta,\Delta}$, but it does not pin down precisely which partitions of disjoint unions of $K_{\Delta,\Delta}$ have disconnected reconfiguration graphs. Buys, Kang, and Ozeki \cite{buys2025reconfiguration} left it as an open problem to exactly characterize the minimal instances $(G,\cU)$ of Theorem \ref{thm:BKO} whose reconfiguration graphs are disconnected.

\begin{prob}[Buys, Kang, Ozeki \cite{buys2025reconfiguration}] \label{prob:main-problem}
Give an exact characterization of all instances $(G, \cU)$ in which $G$ is a disjoint union of $m$ copies of $K_{\Delta,\Delta}$, $\cU = \{U_1, \ldots, U_m\}$ is a partition of $V(G)$ with $|U_i| = 2\Delta$ for all $i$, $\recongraph(G, \cU)$ is disconnected, and $\recongraph(G - U_i, \cU - \{U_i\})$ is connected for all $U_i \in \cU$.
\end{prob}

Buys, Kang, and Ozeki \cite{buys2025reconfiguration} phrased Problem \ref{prob:main-problem} slightly differently, where they have the \textit{block-criticality} condition (i.e., that $\recongraph(G - U_i, \cU - \{U_i\})$ is connected for all $U_i \in \cU$) replaced by the condition that $(G, \cU)$ is \textit{irreducible}, meaning that we cannot find two disjoint vertex-partitioned subgraphs $(G_1, \cU_1)$ and $(G_2, \cU_2)$ satisfying $G = G_1 \cup G_2$ and $\cU = \cU_1 \cup \cU_2$. However, it is easy to show using Theorem \ref{thm:BKO} that these two minimality conditions are equivalent when $G$ is a disjoint union of copies of $K_{\Delta,\Delta}$ and $|U_i| = 2\Delta$ for all $i$. 

Our goal is to fully solve Problem \ref{prob:main-problem}. We refer to instances $(G,\cU)$ described by Problem \ref{prob:main-problem} as \textit{bad instances}. The simplest example of a bad instance $(G, \cU)$ is when $G = K_{\Delta, \Delta}$ and $\cU = \{V(G)\}$, as already noted in \cite{buys2025reconfiguration}. (When $G$ and $\cU$ are empty, we consider $\recongraph(G, \cU)$ to be an isolated vertex corresponding to the empty transversal.) Another example is when $G$ is the disjoint union of two copies of $K_{\Delta,\Delta}$ and $\cU$ is a standard bipartition of $G$. More generally, we have the following class of instances $(G, \cU)$ whose reconfiguration graphs $\recongraph(G,\cU)$ are disconnected (see Lemma \ref{lem:minimal-non-reconfigurable}).

\begin{defin} \label{defin:elementary}
We define an \textit{elementary non-reconfigurable instance} $(G, \cU)$ as follows. The graph $G$ is a disjoint union of $m$ complete bipartite graphs $K_{A_1, B_1} \sqcup \cdots \sqcup K_{A_m,B_m}$. Next, the partition $\cU = \{U_1, \ldots, U_m\}$ of $V(G)$ has the property that we could associate each block $U_i \in \cU$ with a unique complete bipartite component $K_{A_i, B_i}$ of $G$ such that $A_i \subseteq U_i$. Finally, we require that $\recongraph(G - U_i, \cU - \{U_i\})$ is connected for all $U_i \in \cU$.
\end{defin}

Despite their variety, elementary non-reconfigurable instances $(G, \cU)$ on disjoint unions of $K_{\Delta,\Delta}$ are not the only examples of bad instances. Indeed, one of the contributions of this paper is to introduce the following construction lemma that allows one to generate larger non-reconfigurable instances out of smaller ones.

\begin{lem} \label{lem:construction-intro}
Let $G$ and $H$ be disjoint graphs, and let $\cU = \{U_1, \ldots, U_m\}$ and $\cV = \{V_1, \ldots, V_n\}$ be partitions of $V(G)$ and $V(H)$, respectively. Consider the graph $G \cup H$ and any vertex partition of the form $\cW = \{U_1, \ldots, U_{m-1}, V_1', \ldots, V_n' \}$, where $V_1', \ldots, V_n'$ are obtained by distributing each vertex of $U_m$ into one of $V_1, \ldots, V_n$ arbitrarily. If 
\begin{itemize}
	\item[(a)] $\recongraph(G, \cU)$ is disconnected, and
	\item[(b)] $(H, \cV)$ has no IT, but $(H - V_i, \cV - \{V_i\})$ has an IT for any $V_i \in \cV$,
\end{itemize}
then $\recongraph(G \cup H, \cW)$ is disconnected.
\end{lem}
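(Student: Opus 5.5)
The plan is to exhibit two independent transversals of $(G \cup H, \cW)$ and an invariant, constant along edges of $\recongraph(G\cup H,\cW)$, that takes different values on them. The invariant will be a connected component of $\recongraph(G,\cU)$. Throughout I use that $G$ and $H$ are disjoint with no edges between them, so a set is independent in $G \cup H$ exactly when its parts in $V(G)$ and $V(H)$ are independent in $G$ and $H$, respectively.

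\emph{Step 1 (structure of IT's).} Let $T$ be an IT of $(G\cup H,\cW)$ and set $X(T) = T \cap U_m$. I claim $X(T) \neq \emptyset$. Otherwise every vertex $T$ chooses in the blocks $V_1',\ldots,V_n'$ lies in $V(H)$, so it lies in the corresponding $V_i$. Then $T \cap V(H)$ is an IT of $(H,\cV)$, contradicting (b).

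Write $T_G = T \cap (U_1\cup\cdots\cup U_{m-1})$. For each $x \in X(T)$, the set $T_G \cup \{x\}$ is an IT of $(G,\cU)$, since it is a subset of the independent set $T$. For two distinct $x, x' \in X(T)$, the corresponding IT's of $(G,\cU)$ differ only in block $U_m$. Their union lies in $T$, so it is independent, and hence they are adjacent in $\recongraph(G,\cU)$. Therefore all of them lie in a single component of $\recongraph(G,\cU)$, which I call $c(T)$.

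\emph{Step 2 ($c$ is constant along edges).} Let $T, T'$ be adjacent in $\recongraph(G\cup H,\cW)$, say $T' = T - a + b$ with $T \cup T'$ independent.
\begin{itemize}
\item If the changed block is some $U_i$ with $i<m$, then $X(T)=X(T')$. For any $x$ in this set, $T_G\cup\{x\}$ and $T'_G\cup\{x\}$ differ in block $U_i$ and have independent union. So they are adjacent in $\recongraph(G,\cU)$, and $c(T)=c(T')$.
\item If the changed block is some $V_j'$ and $a,b \in V(H)$, then $X(T)=X(T')$ and $T_G = T'_G$, so $c(T)=c(T')$.
\item If $b \in U_m$, then $X(T') = X(T) \cup \{b\}$ up to removing $a$, and $X(T)\setminus\{a\}\subseteq X(T')$. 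If $X(T)\setminus\{a\}$ is nonempty, a common element $x$ gives the same IT $T_G\cup\{x\}$ for both, so $c(T)=c(T')$.
\item The remaining case is $X(T)=\{a\}$ with $a\in U_m$, or symmetrically with the roles of $T$ and $T'$ swapped. By Step 1 applied to $T'$, we must have $b \in U_m$, since otherwise $X(T')=\emptyset$. Then $T_G\cup\{a\}$ and $T_G\cup\{b\}$ are adjacent in $\recongraph(G,\cU)$, because $T\cup T'$ is independent, and again $c(T)=c(T')$.
\end{itemize}
It follows that $c$ is constant on each component of $\recongraph(G\cup H,\cW)$.

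\emph{Step 3 (two IT's with different values).} By (a), choose IT's $S_0,S_1$ of $(G,\cU)$ lying in different components of $\recongraph(G,\cU)$. For $S_0$, let $u$ be its vertex in $U_m$, and let $V_j'$ be the block of $\cW$ into which $u$ was distributed. By (b), $(H-V_j,\cV-\{V_j\})$ has an IT $R$. Then $T_0 = S_0\cup R$ is an IT of $(G\cup H,\cW)$: it is independent, and it uses $u$ for $V_j'$ and $R$ for the other $V_i'$. Build $T_1$ from $S_1$ in the same way. We have $X(T_0) = \{u\}$, so $c(T_0)$ is the component of $S_0$, and likewise $c(T_1)$ is the component of $S_1$. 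These components differ, so by Step 2 the IT's $T_0$ and $T_1$ lie in different components, and $\recongraph(G\cup H,\cW)$ is disconnected.

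The main obstacle is the case analysis in Step 2, specifically the edge that removes the last $U_m$-vertex of $T$. That case is exactly where the no-IT half of (b) is needed: it forces the new vertex $b$ to lie in $U_m$ as well, so the invariant can be carried across the edge. The criticality half of (b) is used only in Step 3, to build $T_0$ and $T_1$.
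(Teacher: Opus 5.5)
Your proof is correct and is essentially the paper's argument: the paper proves the contrapositive by projecting a reconfiguration path in $\recongraph(G\cup H,\cW)$ onto $V(G)$, using that $(H,\cV)$ has no IT to guarantee that each projected set contains an IT of $(G,\cU)$ (your Step 1), and it lifts the endpoints via the criticality half of (b) (your Step 3). Your component-valued invariant $c(T)$, with its explicit edge-by-edge case check, is a cleaner way to verify what the paper handles more tersely with its maximal-interval replacement, namely that consecutive projected IT's are equal or adjacent.
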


\begin{figure}
	\begin{center}
		\includegraphics[scale=0.8]{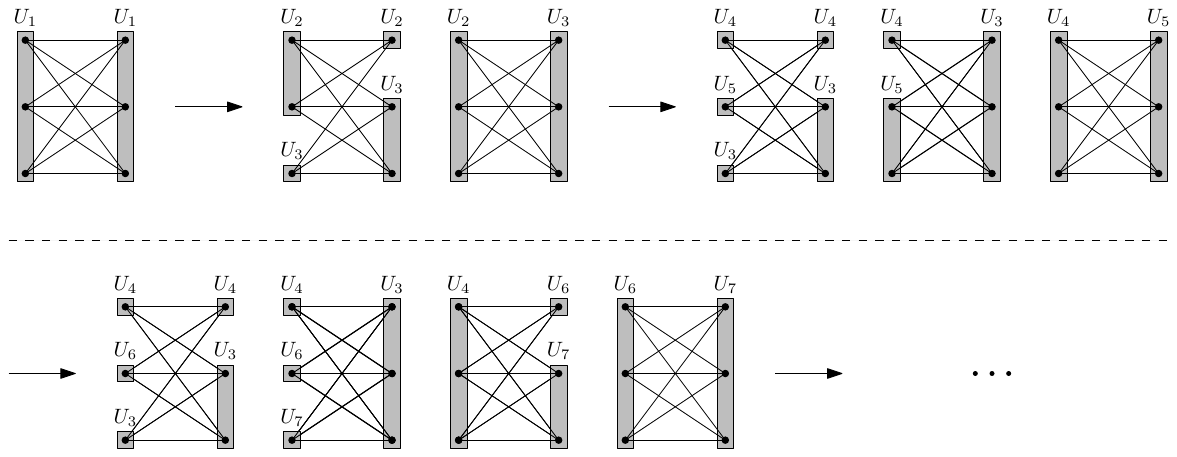}
	\end{center}
	\caption{Successive applications of Lemma \ref{lem:construction-intro}. At each step, we add a new copy of $(H, \cV) = (K_{A,B}, \{A,B\})$ with $|A|, |B| = 3$, choose a block $U_i$ from the previous graph, and distribute the vertices of $U_i$ into $A$ and $B$.}
	\label{fig:construction}
\end{figure}

See Figure \ref{fig:construction} for an example of successively applying Lemma \ref{lem:construction-intro}, where at each step $(H,\cV) = (K_{A,B}, \{A,B\})$ with $|A| = |B| = \Delta$. An elementary proof of Lemma \ref{lem:construction-intro} is given in Section \ref{subsec:construction-properties}. We show in Lemma \ref{lem:refined-construction} that under a mild additional condition, the instance $(G \cup H, \cW)$ in Lemma \ref{lem:construction-intro} retains the block-criticality property of instance $(G, \cU)$ (i.e., that deleting any block results in a connected reconfiguration graph). 

Our main result is that, in fact, all bad instances are obtained by starting with an elementary non-reconfigurable instance and then iteratively applying Lemma \ref{lem:construction-intro} with $(H,\cV) = (K_{A,B}, \{A,B\})$, similar to what is seen in Figure \ref{fig:construction}. This gives an exact characterization requested in Problem \ref{prob:main-problem}.

\begin{thm} \label{thm:construction-universal}
Every bad instance $(G, \cU)$ can be obtained using Lemma \ref{lem:construction-intro}. Specifically, $(G, \cU)$ is either an elementary non-reconfigurable instance of disjoint unions of $K_{\Delta,\Delta}$ with $|U_i| = 2\Delta$ for all $i$, or it can be obtained from such an elementary non-reconfigurable instance by iteratively applying Lemma \ref{lem:construction-intro} with $(H, \cV) = (K_{A,B}, \{A,B\})$, $|A| = |B| = \Delta$, and distributing $\Delta$ vertices of $U_m \in \cU$ into each of $A$ and $B$ at every iteration. 
\end{thm}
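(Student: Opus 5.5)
We argue by induction on $m=|\cU|$, the number of blocks (equivalently, of $K_{\Delta,\Delta}$ components). Let $(G,\cU)$ be a bad instance, with $G=K_{A_1,B_1}\sqcup\cdots\sqcup K_{A_m,B_m}$ and $|A_j|=|B_j|=\Delta$. A transversal of $(G,\cU)$ is independent exactly when, in every component $K_{A_j,B_j}$, it picks vertices from at most one side. So each IT $S$ determines a \emph{side pattern}: every component is labelled $A$, $B$, or empty according to which side $S$ meets.

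We split into two cases according to how the blocks sit relative to the component sides.

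\textbf{Case 1: matching.} Suppose there is a bijection $\sigma$ from blocks to components such that, for every $i$, the block $U_i$ contains a full side of $K_{A_{\sigma(i)},B_{\sigma(i)}}$. Relabelling sides, this means $A_{\sigma(i)}\subseteq U_i$. Block-criticality is part of the definition of a bad instance, so $(G,\cU)$ is then elementary by Definition \ref{defin:elementary}.

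\textbf{Case 2: no such matching.} Consider the bipartite "contains a full side" relation between blocks and components. By Hall's theorem there is a set of components $\mathcal{C}$ for which the blocks containing a full side of some member of $\mathcal{C}$ are too few.

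The target structure in this case is a component $K_{A,B}$ together with a block $U_m$ such that:
\begin{itemize}
\item[(i)] $K_{A,B}$ meets exactly three blocks $X,Y,U_m$;
\item[(ii)] $A\subseteq X$ and $B\subseteq Y$ up to the $2\Delta$ vertices of $U_m$;
\item[(iii)] $U_m$ contains exactly $\Delta$ vertices of $X\cup Y$ outside $K_{A,B}$, which come from elsewhere.
\end{itemize}
More precisely, we want to reverse Lemma \ref{lem:construction-intro}. We look for two blocks $V_1',V_2'$ and a component $K_{A,B}$ such that:
\begin{itemize}
\item[(i')] $V_1'\supseteq A$ and $V_2'\supseteq B$;
\item[(ii')] $U:=(V_1'\setminus A)\cup(V_2'\setminus B)$ has size $2\Delta$ and is disjoint from $K_{A,B}$.
\end{itemize}
Contracting then produces a smaller instance $(G',\cU')$. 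Here $G'=G-(A\cup B)$, and $\cU'$ is obtained from $\cU$ by replacing $V_1',V_2'$ with the single block $U$. Clearly $(G,\cU)$ arises from $(G',\cU')$ by one application of Lemma \ref{lem:construction-intro} with $(H,\cV)=(K_{A,B},\{A,B\})$, distributing $\Delta$ vertices of $U$ into each of $A$ and $B$.

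Once such a configuration is found, two things must be checked. First, $\recongraph(G',\cU')$ is disconnected. We show that a path between IT's of $(G',\cU')$ lifts to a path in $\recongraph(G,\cU)$. The lift sets the choices in $V_1',V_2'$ to the chosen vertex of $U$ plus a vertex of $A$ or $B$ placed on the opposite side. The swaps are routed through the component $K_{A,B}$, which is free because $A$ and $B$ lie in different blocks. Any two components of $\recongraph(G,\cU)$ then project to distinct components of $\recongraph(G',\cU')$. Second, $(G',\cU')$ is block-critical. This follows since deleting a block of $\cU'$ corresponds to deleting a block of $\cU$ and then removing an IT-free piece, and we can use Theorem \ref{thm:BKO} on the remaining instance, whose blocks have size at least $2\Delta$ and contain no full union of $K_{\Delta,\Delta}$'s. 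Induction then finishes the argument.

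The main obstacle is the structural step: proving that when no matching exists, a reducible configuration (i')--(ii') must appear. We plan to use the approach of \cite{haxell2024constructing}. By Theorem \ref{thm:BKO} applied to subinstances and the block-criticality condition, every proper subfamily of blocks has a connected reconfiguration graph. This forces the Hall-deficient family to be "tight", meaning the union of the relevant blocks induces exactly the components in $\mathcal{C}$ with no slack.

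Within the tight family we study two IT's $S,T$ in different components of $\recongraph(G,\cU)$. They must disagree on the side pattern of some component. For each disagreeing component $K_{A,B}$, we argue via an alternating-path or counting argument, using $|U_i|=2\Delta$ and the degree $\Delta$ exactly, that $A$ and $B$ lie in distinct blocks. If some block met both $A$ and $B$, a swap within that block would merge $S$ and $T$, contradicting disconnectedness. Then, choosing a disagreeing component that is extremal, for instance a leaf in an auxiliary tree of blocks and components, should yield the leftover block $U$ of size exactly $2\Delta$. We expect this to be the hardest part and would first try induction on a minimal counterexample together with the exact counting $\sum|U_i|=2\Delta m=|V(G)|$.
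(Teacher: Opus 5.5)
Your reduction step is sound in outline. If some component has $A\subseteq V_1'$ and $B\subseteq V_2'$ with $V_1'\neq V_2'$, then (ii') is automatic from $|V_1'|=|V_2'|=2\Delta$. Your path-lifting shows $\recongraph(G',\cU')$ is disconnected. Block-criticality of $(G',\cU')$ follows from Theorem~\ref{thm:BKO} once you note that block-criticality of $(G,\cU)$ forbids a nonempty proper family of blocks from covering exactly as many whole components. Together these do the job of Lemma~\ref{lem:refined-construction}.

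The genuine gap is the structural step. It is the whole content of the theorem, and you only outline it. The missing idea is to prove first that \emph{every} block contains a full side of some component; this is Theorem~\ref{thm:equivalent-general}. With it, Case~2 is a one-line count and Hall is unnecessary: in the block--component relation each block has degree at least $1$, each component has degree at most $2$, and there are $m$ of each, so if no component had degree $2$ the relation would be a perfect matching. Without it, a Hall deficiency could a priori come from a block containing no full side, and then no configuration (i')--(ii') need exist; your ``tightness'' does not address this.

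The paper obtains Theorem~\ref{thm:equivalent-general} from an induced matching configuration:
\begin{itemize}
\item take IT's $S,T$ in different components of $\recongraph(G,\cU)$ with $|I(S\cap T)|$ maximum;
\item grow $R\supseteq S\triangle T$ by repeatedly adding a vertex not totally dominated by $R$, together with its neighbours in a carefully chosen modification of $S\cap T$;
\item use block-criticality to get $I(R)=[m]$, and the hypothesis that $G$ has exactly $|\cU|$ components to force $G[R]$ to be an induced matching meeting every component once;
\item switch along paths of the rooted block forest $\mathcal{G}_{R-(S-T)}$ to find a vertex whose whole neighbourhood lies in the target block.
\end{itemize}

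The heuristic you sketch for this step is also false. A block meeting both sides of a disagreeing component does not let you merge $S$ and $T$: for $G=K_{\Delta,\Delta}$, $\cU=\{V(G)\}$, the single block meets both sides, yet no vertex of $A$ can be swapped for one of $B$. Nor need the reducible component be a disagreeing one. Let $\Delta=2$, $G=K_{A_1,B_1}\sqcup K_{A_2,B_2}$ with $A_1=\{a,a'\}$, $B_1=\{b,b'\}$, and blocks $X=A_2\cup\{a,b\}$, $Y=B_2\cup\{a',b'\}$.
\begin{itemize}
\item This is one application of Lemma~\ref{lem:construction-intro} to the one-block $K_{2,2}$, and it falls in your Case~2.
\item It is a bad instance. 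Every IT meets exactly one side of $K_{A_1,B_1}$, since every vertex of $A_2$ is adjacent to every vertex of $B_2$, and adjacent IT's meet the same side. Deleting either block leaves a connected reconfiguration graph.
\item For $c\in A_2$, the IT's $S=\{c,a'\}$ and $T=\{c,b'\}$ lie in different components and agree on as many blocks as possible.
\item The only component where $S$ and $T$ disagree is $K_{A_1,B_1}$, and each of its sides meets both blocks.
\item The reducible component $K_{A_2,B_2}$ is one on which $S$ and $T$ agree; the full side $B_2=N_G(c)\subseteq Y$ is reached through $S\cap T$.
\end{itemize}

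The same example shows the disagreement case is delicate on the paper's route too. Here $a'b'$ is the edge of $G[S\triangle T]$ inside $Y$, but $N_G(b')=A_1\not\subseteq Y$, contrary to Claim~\ref{claim-10}: the switching sequence in its proof would insert $b'$ while its neighbour $a$ is present. So blocks on which $S$ and $T$ disagree need a separate argument there as well.
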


For a polynomial-time characterization of bad instances and how a given one can be reconstructed using Lemma \ref{lem:construction-intro}, see Remark \ref{rmk:poly-time}. We will actually prove a more general version of Theorem \ref{thm:construction-universal} (see Theorem \ref{thm:more-general}) in which the graph $G$ may consist of other complete bipartite graphs besides $K_{\Delta,\Delta}$ and there are no assumptions on the block sizes. We only insist that the number of components of $G$ and the number of blocks in $\cU$ are equal.

As indicated above, the work in this paper is analogous to but separate from previous work by Haxell and the author \cite{haxell2024constructing}. In the latter work, these authors introduced the analogue of Lemma \ref{lem:construction-intro} where if $(G, \cU)$ and $(H, \cV)$ both have no IT, then $(G \cup H, \cW)$ also has no IT \cite[Lemma 3]{haxell2024constructing}. They proved, in particular, that all instances $(G, \cU)$ with no IT, where $G$ is the disjoint union of $2\Delta - 1$ copies of $K_{\Delta,\Delta}$ and $|U_i| = 2\Delta-1$ for all $i$, can be generated using their construction lemma. This unified previous extremal constructions by Jin \cite{jin1992complete}, Yuster \cite{yuster1997independent}, and Szab\'o and Tardos \cite{szabo2006extremal}. Other applications of their construction lemma are found in \cite{cambie2024precise, haxell2024degree, wdowinski2026bounded}. Our proof of Theorem \ref{thm:construction-universal} will follow a similar technical combinatorial approach to \cite{haxell2024constructing}, but the details will be different, and in particular we will utilize combinatorial ingredients from \cite{buys2025reconfiguration, wdowinski2025hall} to deal with the reconfiguration setting. Our proof will be self-contained. Although the scope of our characterization in Theorems \ref{thm:construction-universal} and \ref{thm:more-general} is not remarkably wide, the careful combinatorial techniques used in this paper are of interest in further studies of reconfigurations of independent transversals and other related objects.

This paper is organized as follows. In Section \ref{sec:construction-lemma}, we describe a refined version of Lemma \ref{lem:construction-intro} (Lemma \ref{lem:refined-construction}) and generalized versions of Theorem \ref{thm:construction-universal} that we will prove (Theorems \ref{thm:more-general} and \ref{thm:equivalent-general}). Some details are deferred to Appendix \ref{sec:appendix}. In Section \ref{sec:main-proof}, we give our detailed combinatorial proof of Theorem \ref{thm:equivalent-general}, from which Theorems \ref{thm:construction-universal} and \ref{thm:more-general} follow. In Section \ref{sec:discussion}, we conclude with some remarks on other combinatorial applications of Lemma \ref{lem:construction-intro} as well as its topological aspects.

\section{Properties of the construction lemma and a generalized setting} \label{sec:construction-lemma}

In this preliminary section, we gather the relevant properties of our main construction lemma (Lemma \ref{lem:construction-intro}) for the proof of Theorem \ref{thm:construction-universal}. After that, we state our generalized version of Theorem \ref{thm:construction-universal} (Theorem \ref{thm:more-general}) as well as an equivalent formulation that we will actually prove (Theorem \ref{thm:equivalent-general}).

\subsection{Properties of construction lemma} \label{subsec:construction-properties}

We start this subsection with a proof of Lemma \ref{lem:construction-intro}, which was our main construction lemma.

\begin{proof}[Proof of Lemma \ref{lem:construction-intro}]
We show, assuming condition (b) and that $\recongraph(G \cup H, \cW)$ is connected, that $\recongraph(G, \cU)$ is also connected. Consider any two IT's $S, T$ of $(G, \cU)$. Say that $S \cap U_m$ lies in $V_j'$ and that $T \cap U_m$ lies in $V_k'$, where $1 \le j_0, j_1 \le n$. Applying condition (b), let $S'$ be an IT of $(H - V_j, \cV - \{V_j\})$, and let $T'$ be an IT of $(H - V_k, \cV - \{V_k\})$, so that $S \cup S'$ and $T \cup T'$ are IT's of $(G \cup H, \cW)$. Since $\recongraph(G \cup H, \cW)$ is connected, there is a reconfiguration sequence of IT's of $(G \cup H, \cW)$ from $S \cup S'$ to $T \cup T'$, say $S \cup S' = S_0, S_1, \ldots, S_{N-1}, S_N = T \cup T'$. The sequence $S = S_0 \cap V(G), S_1 \cap V(G), \ldots, S_{N-1} \cap V(G), S_N \cap V(G) = T$ can be modified into a reconfiguration sequence of IT's of $(G, \cU)$ from $S$ to $T$, which would finish the proof. Specifically, since $(H, \cV)$ has no IT by condition (b), the set $S_i \cap V(G)$ contains an IT of $(G, \cU)$ for all $0 \le i \le N$. For each maximal interval $[i_0, i_1]$ for which $S_i \cap U_m$ stays the same for all $i \in [i_0, i_1]$, we choose a single vertex $v \in S_{i_0} \cap U_m$ and replace the set $S_i \cap V(G)$ by the set $(S_i - U_m) \cup \{v\}$ in the sequence for all $i \in [i_0, i_1]$. Finally, we collapse consecutive copies of the same IT in the resulting sequence into a single IT. This gives the desired reconfiguration sequence from $S$ to $T$.
\end{proof}

Next, we will need a refined version of Lemma \ref{lem:construction-intro} to prove Theorem \ref{thm:construction-universal}. For that purpose, we introduce the following definitions.

\begin{defin}
Let $G, H$ be graphs, and let $\cU = \{U_1, \ldots, U_m\}, \cV = \{V_1, \ldots, V_n\}$ be partitions of $V(G), V(H)$ respectively. 
\begin{itemize}
	\item[(1)] We say that $(G, \cU)$ is \textit{minimally RGD} (reconfiguration graph disconnected) if $\recongraph(G, \cU)$ is nonempty and disconnected, but $\recongraph(G - U_i, \cU - \{U_i\})$ is connected for all $U_i \in \cU$.
	\item[(2)] We say that $(H, \cV)$ is \textit{minimally NIT} (no independent transversal) if $(H, \cV)$ has no IT, but $\recongraph(H - V_i, \cV - \{V_i\})$ is nonempty and connected for all $V_i \in \cV$.
\end{itemize}
\end{defin}

We note that the only examples of minimally NIT instances $(H, \cV)$ that are actually relevant for our main results are complete bipartite graphs $G = K_{A,B}$ with their standard bipartitions $\cU = \{A,B\}$. 

The following refined lemma explains how Lemma \ref{lem:construction-intro} (and its reversal) preserves the property of being minimally RGD. While the statement is intuitive, the proof is a tedious verification of definitions and hence is deferred to Appendix A.

\begin{lem} \label{lem:refined-construction}
Let $G$ and $H$ be disjoint graphs, and let $\cU = \{U_1, \ldots, U_m\}$ and $\cV = \{V_1, \ldots, V_n\}$ be partitions of $V(G)$ and $V(H)$, respectively. Consider the graph $G \cup H$ and any vertex partition of the form $\cW = \{U_1, \ldots, U_{m-1}, V_1', \ldots, V_n' \}$ where $V_1', \ldots, V_n'$ are obtained by distributing each vertex of $U_m$ into one of $V_1, \ldots, V_n$ arbitrarily. 
Assume that $(H, \cV)$ is minimally NIT. Then $(G, \cU)$ is minimally RGD if and only if $(G \cup H, \cW)$ is minimally RGD.
\end{lem}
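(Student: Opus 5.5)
The plan is to understand the independent transversals of $(G\cup H,\cW)$ through their restrictions to $G$ and to $H$, and then check the three parts of ``minimally RGD'' (nonempty, disconnected, block-critical) in each direction. Let $X$ be an IT of $(G\cup H,\cW)$. Since $(H,\cV)$ has no IT, $X$ must use vertices of $U_m$ in a nonempty set $J\subseteq[n]$ of the blocks $V_j'$. Then $X\cap V(H)$ is an IT of $(H-\bigcup_{j\in J}V_j,\ \cV-\{V_j:j\in J\})$, and $(X\cap V(G))\cup\{\text{any one of the chosen }U_m\text{ vertices}\}$ is an IT of $(G,\cU)$. Conversely, take an IT $S$ of $(G,\cU)$ with $S\cap U_m=\{u\}\subseteq V_j'$, and an IT $S'$ of $(H-V_j,\cV-\{V_j\})$. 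Since $G$ and $H$ have no edges between them, $S\cup S'$ is an IT of $\cW$. Nonemptiness transfers in both directions from this correspondence together with the nonemptiness clause of minimal NIT.

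The core step is a transfer lemma: $\recongraph(G,\cU)$ is connected if and only if $\recongraph(G\cup H,\cW)$ is connected, under the standing assumption that $(H,\cV)$ is minimally NIT.

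\emph{Direction $\Leftarrow$.} The collapsing argument in the proof of Lemma \ref{lem:construction-intro} already gives this. It also handles multi-slot transversals: on each maximal interval we keep one fixed $U_m$-vertex.

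\emph{Direction $\Rightarrow$.} Fix a reconfiguration step $S\to T$ in $\recongraph(G,\cU)$ and lift it. If the step changes a block $U_i$ with $i<m$, or changes the $U_m$-vertex inside the same $V_j'$, we keep the $H$-part fixed. Since $\recongraph(H-V_j,\cV-\{V_j\})$ is connected, any two lifts of the same $S$ are connected, so the lift does not depend on the choice of $S'$. The only remaining case is a step $u\to u'$ with $u\in V_j'$ and $u'\in V_k'$, $j\neq k$. Here I would route through a two-slot transversal. From $S\cup S'$, replace the $H$-vertex $s_k\in V_k$ of $S'$ by $u'$; this is allowed because $u'$ is non-adjacent to $S-u$ and there are no $G$–$H$ edges. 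Then move the $H$-part inside the two-slot states until it extends into $V_j$, and finally replace $u$ by an $H$-vertex of $V_j$.

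I expect this middle move to be the main obstacle. We need an IT $S'$ of $H-V_j$ and an IT $T'$ of $H-V_k$ whose restrictions to $H-V_j-V_k$ are linked by moves that stay within the two-slot states. The first thing I would try is to use connectivity of $\recongraph(H-V_j,\cV-\{V_j\})$ with $u$ held fixed. Any IT of $H-V_j$ restricts, by deleting its $V_k$-vertex, to a valid $H$-part of a two-slot state. So I would start from an IT $T'$ of $H-V_k$, delete its $V_j$-vertex $t_j$, and try to re-add some vertex of $V_k$. If that fails, I would allow further slots, i.e.\ other $U_m$-vertices non-adjacent to $S-u$, and argue by induction on $|J|$ that all multi-slot lifts over a fixed $G$-transversal lie in one component. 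In the application where $(H,\cV)=(K_{A,B},\{A,B\})$ this step is immediate, since the $H$-part of a two-slot state is empty.

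With the transfer lemma in hand, the equivalence follows by checking minimality.
\begin{itemize}
\item Disconnectedness transfers directly by the transfer lemma.
\item Deleting a block $U_i$ with $i<m$ from $\cW$ is the same as applying the construction to $(G-U_i,\cU-\{U_i\})$ with the same $H$. Applying the transfer lemma to this instance shows that $\recongraph(G-U_i,\cU-\{U_i\})$ is connected if and only if the corresponding deletion from $\cW$ is.
\item Deleting a block $V_j'$ from $\cW$ has a different effect. It forces the $H$-part to contain an IT of $H-V_j$, and therefore the $U_m$-vertex, if one is used, to lie in some $V_k'$ with $k\neq j$. By minimality, $\recongraph(H-V_j,\cV-\{V_j\})$ is connected and nonempty, and the $G$-part is an independent set meeting $U_1,\dots,U_{m-1}$, with $U_m$ free. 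So I would compare this deleted instance with $(G-U_m,\cU-\{U_m\})$ via the same collapsing and lifting arguments, and deduce connectivity from that of $\recongraph(G-U_m,\cU-\{U_m\})$.
\item For the reverse implication, the deletion of $U_m$ from $\cU$ is recovered from the deletion of any $V_j'$ from $\cW$ by the same comparison.
\end{itemize}
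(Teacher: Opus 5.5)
\textbf{The gap.} Your decomposition matches the paper's: transfer of connectivity in both directions, then block deletions. But the step you flagged as the main obstacle (lifting a move $u\to u'$ with $u\in V_j'$, $u'\in V_k'$) is a genuine gap, and it cannot be closed. The $\Rightarrow$ direction of your transfer lemma is false for general minimally NIT $(H,\cV)$.

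Let $H$ be the path $a\,c_1\,c_2\,b$ with $\cV=\{\{a\},\{b\},\{c_1,c_2\}\}$. It has no IT, since $ac_1$ and $c_2b$ are edges. Deleting any one block leaves exactly one IT, namely $\{b,c_1\}$, $\{a,c_2\}$ or $\{a,b\}$, so $(H,\cV)$ is minimally NIT. Let $G$ be two isolated vertices $u,u'$ forming a single block, so $\recongraph(G,\cU)$ is a single edge. Put $u$ into $\{a\}$ and $u'$ into $\{b\}$. The IT's of $(G\cup H,\cW)$ are $\{u,b,c_1\}$, $\{u,u',c_1\}$, $\{a,u',c_2\}$ and $\{u,u',c_2\}$. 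Because $c_1c_2$ is an edge, the only reconfiguration edges are $\{u,b,c_1\}$--$\{u,u',c_1\}$ and $\{a,u',c_2\}$--$\{u,u',c_2\}$. Hence $\recongraph(G\cup H,\cW)$ is disconnected. The two-slot $H$-parts $\{c_1\}$ and $\{c_2\}$ lie in different components of $\recongraph(H-\{a\}-\{b\})$. This is exactly why neither of your fallbacks can work: re-adding a vertex of $V_k$ fails, and so does induction showing that all multi-slot lifts over a fixed $G$-part lie in one component.

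\textbf{The statement itself fails in this generality.} Let $(G,\cU)$ be the disjoint edges $x_1y_1$, $x_2y_2$ with $U_1=\{x_1,y_2\}$ and $U_2=\{x_2,y_1\}$; this instance is minimally RGD. Distribute $x_2$ into $\{a\}$ and $y_1$ into $\{b\}$.
\begin{itemize}
\item Deleting $U_1$ from $\cW$ gives precisely the disconnected instance above.
\item Deleting $\{a,x_2\}$ leaves $\{y_2,y_1,c_2\}$ as an isolated IT, alongside others such as $\{x_1,b,c_1\}$.
\end{itemize}
So $(G\cup H,\cW)$ is not minimally RGD, and your sketched $V_j'$-deletion comparison breaks as well.

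\textbf{The paper's proof has the same hole.} It gets past this point only by asserting that $\recongraph(H-V_{j_0}-V_{j_1},\cV-\{V_{j_0},V_{j_1}\})$ is connected. In the $V_j'$-deletion case it likewise asserts connectivity of the reconfiguration graph of $H$ minus several blocks. Neither follows from minimal NIT; in the example above, $\recongraph(H-\{a\}-\{b\})$ is two isolated vertices.

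\textbf{What survives.} The case $(H,\cV)=(K_{A,B},\{A,B\})$ is true, and it is all that Theorem~\ref{thm:more-general} uses. There your outline goes through once written out:
\begin{itemize}
\item $H-A-B$ is empty, so the middle move is immediate, as you noted.
\item Two lifts of the same $G$-transversal are joined by one move inside the free $H$-block.
\item Every two-slot IT moves to a one-slot IT by replacing a $U_m$-vertex with an $H$-vertex.
\item After deleting the block $A'\supseteq A$ of $\cW$ (or $B'$), the surviving $H$-vertices are isolated. Every IT then moves to one of the form $P\cup\{b\}$ with $b\in B$, so connectivity is equivalent to that of $\recongraph(G-U_m,\cU-\{U_m\})$; the converse direction follows by projection.
\end{itemize}
You should restrict the lemma to this case, or add the hypothesis that the reconfiguration graph of $H$ minus any two or more blocks is connected, which is what both arguments actually use. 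Then replace your last two bullet-point sketches with these explicit arguments.
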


In addition, we state the following lemma which is technically needed for all of our main results. The proof is also deferred to Appendix \ref{sec:appendix}. 

\begin{lem} \label{lem:minimal-non-reconfigurable}
If $(G, \cU)$ is a minimal non-reconfigurable instance, as in Definition \ref{defin:elementary}, then $\recongraph(G, \cU)$ is disconnected.
\end{lem}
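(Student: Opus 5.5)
The proof will exhibit one explicit component of $\recongraph(G,\cU)$ and then show that $\recongraph(G,\cU)$ has an IT outside it. Write $G = K_{A_1,B_1}\sqcup\cdots\sqcup K_{A_m,B_m}$ with $A_i \subseteq U_i$.

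\textbf{The $A$-component.} Let $\mathcal{A}$ be the set of transversals choosing one vertex from each $A_i$, with the vertex of $U_i$ taken from $A_i$. Every $T \in \mathcal{A}$ is an IT: distinct $A_i$ lie in distinct components, and each $A_i$ is independent. Also, $\mathcal{A}$ is connected in $\recongraph(G,\cU)$, since replacing $a_i \in A_i$ by $a_i' \in A_i$ keeps $T \cup \{a_i'\}$ independent.

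\textbf{$\mathcal{A}$ is closed under reconfiguration steps.} The key observation is the following. Because $A_j \subseteq U_j$ for every $j$, an IT $T$ with $T \subseteq \bigcup_j A_j$ must satisfy $T \cap U_i \subseteq A_i$ for all $i$. Hence such a $T$ lies in $\mathcal{A}$ and meets every $A_i$.

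Now let $T \in \mathcal{A}$ and let $T' = (T - \{u\}) \cup \{v\}$ be a neighbour, with $u \in A_i$ and $v \in U_i$. Suppose $v \in B_k$ for some $k$.
\begin{itemize}
\item If $k = i$, then $v$ is adjacent to $u$, which is forbidden since $T \cup T'$ must be independent.
\item If $k \neq i$, then $v$ is adjacent to the vertex of $T$ in $A_k$, which still lies in $T'$.
\end{itemize}
So $v \in \bigcup_j A_j$, and by the observation $T' \in \mathcal{A}$. Therefore $\mathcal{A}$ is exactly one connected component of $\recongraph(G,\cU)$.

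\textbf{An IT outside $\mathcal{A}$.} It remains to show that $\recongraph(G,\cU)$ contains an IT using some vertex of some $B_i$. This step needs the block-criticality condition in Definition~\ref{defin:elementary}, and I expect it to be the main obstacle.

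First, one must rule out the degenerate case where every $B_i$ is empty. Then $G$ is edgeless, $U_i = A_i$, and $\recongraph(G,\cU)$ is connected. So the definition must implicitly require nonempty parts; I would make this explicit, or derive it from the intended setting.

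Next fix $b \in B_i$, say $b \in U_k$.
\begin{itemize}
\item If $k = i$, the set $\{b\} \cup \{a_j : j \ne i\}$ with $a_j \in A_j$ is an IT, and we are done.
\item If $k \ne i$, I would follow a chain. Putting $b$ in $U_k$ forces $U_i$ to choose a vertex outside $A_i$, which must be a non-$A$ vertex of some component on its $B$-side. Iterating gives a sequence of indices $i = i_0, i_1, i_2, \dots$. Components on the chain are switched to their $B$-side, while all other blocks keep an $A$-vertex. The process must eventually return to an already switched component, because the index set is finite.
\end{itemize}

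Turning the closed chain into a genuine IT requires two things: the chosen $B$-vertices must lie in pairwise distinct blocks, and no block may be forced into a conflicting choice. Rather than a pure greedy argument, I would try first to argue by contradiction using criticality. Suppose every IT of $(G,\cU)$ lies in $\mathcal{A}$. Delete a block $U_k$ containing a $B$-vertex. Since $\recongraph(G-U_k,\cU-\{U_k\})$ is connected, one can try to transport, along a reconfiguration path in the smaller instance, an IT of $G - U_k$ containing that $B$-vertex back to one that extends by a vertex of $U_k$. Either step would contradict the assumption. This would yield a second component, so $\recongraph(G,\cU)$ is disconnected.
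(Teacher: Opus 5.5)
Your first half is correct, and better justified than the paper's one-line observation: the set $\mathcal{A}$ of transversals using an $A_i$-vertex in every $U_i$ is an entire connected component of $\recongraph(G,\cU)$.

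The gap is the second half, which is the real content of the lemma: you never produce an IT outside $\mathcal{A}$. Your forward chain, started at an arbitrary $b\in B_i\cap U_k$, can die, because the block $U_i$ that is forced off $A_i$ may contain nothing else. For example, take $m=2$, $U_1=A_1$ and $U_2=A_2\cup B_1\cup B_2$, with all parts nonempty. This is an elementary non-reconfigurable instance, block-criticality included, yet no vertex of $B_1$ lies in any IT. So ``fix $b$ and iterate'' needs a rule for where to start, and you do not supply one.

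The criticality-based fallback has no mechanism. As written, it even asks for an IT of $G-U_k$ containing a $B$-vertex of the deleted block $U_k$. In fact block-criticality plays no role here. The paper's argument uses only $A_i\subseteq U_i$ and that \emph{every} $B_j$ is nonempty. That is more than excluding the case where all $B_i$ are empty: a single empty part already breaks the lemma. Take an edge $ab$ plus an isolated vertex $c$, with $U_1=\{a\}$ and $U_2=\{b,c\}$; this is block-critical, but $\recongraph(G,\cU)$ is a single vertex.

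The missing idea is to choose the switched blocks so that they form a closed system. The paper does this by peeling:
\begin{enumerate}
\item Repeatedly take all remaining blocks $U_i$ whose intersection with the remaining components is exactly $A_i$, give each an $A_i$-vertex, and delete their components.
\item A peeled block never meets a remaining component, so the remaining blocks partition the remaining vertices.
\item If some round peeled every remaining block, the remaining vertices would be $\bigcup A_i$ over the remaining $i$, so the remaining $B_j$ would be empty, a contradiction.
\item Hence peeling stops at a nonempty index set in which each $U_i$ contains a $B$-vertex of a remaining component.
\item Choosing those vertices gives the required IT: all $B$-vertices are pairwise non-adjacent, and the earlier $A$-vertices lie in deleted components.
\end{enumerate}

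Equivalently, and closer to your chain, walk backwards instead of forwards. Form the digraph on $[m]$ with an arc $i\to j$ whenever $U_i\cap B_j\neq\emptyset$. Every vertex has in-degree at least one because $B_j\neq\emptyset$, so there is a directed cycle $i_1\to i_2\to\cdots\to i_r\to i_1$; a loop is your case $k=i$. Choose $b_t\in U_{i_t}\cap B_{i_{t+1}}$ on the cycle (indices mod $r$) and an $A_l$-vertex in every other block $U_l$. This is an IT outside $\mathcal{A}$, since each $B_{i_{t+1}}$ only conflicts with $A_{i_{t+1}}$, and that block is itself switched. In-arcs always exist; the out-arcs your chain follows need not.
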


\subsection{A generalized setting} \label{subsec:generalized-setting}

Below is our more general theorem version of Theorem \ref{thm:construction-universal}, where we do not require the complete bipartite components to be $K_{\Delta,\Delta}$ and we do not make assumptions on block sizes.

\begin{thm} \label{thm:more-general}
Let $G$ be a graph and let $\cU = \{U_1, \ldots, U_m\}$ be a partition of $V(G)$. Suppose that
\begin{itemize}
	\item[(a)] $(G, \cU)$ is minimally RGD,
	\item[(b)] $G$ is a disjoint union of complete bipartite graphs, and
	\item[(c)] $G$ has exactly $|\cU|$ connected components.
\end{itemize}
Then $(G, \cU)$ can be constructed using Lemma \ref{lem:construction-intro}, by starting with an elementary non-reconfigurable instance and iteratively adding complete bipartite graphs with the standard bipartition.
\end{thm}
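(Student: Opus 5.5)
We argue by induction on $m=|\cU|$ (equivalently, on the number of components of $G$). Write $G=K_{A_1,B_1}\sqcup\cdots\sqcup K_{A_m,B_m}$. The key dichotomy is whether $(G,\cU)$ admits a ``matching'' of blocks to components as in Definition \ref{defin:elementary}: an injection assigning to each block $U_i$ a component $K_{A_{\sigma(i)},B_{\sigma(i)}}$ with one side contained in $U_i$. If such an assignment exists, then $(G,\cU)$ is elementary non-reconfigurable, since block-criticality is part of being minimally RGD, and we are done. Otherwise we will find a component $K_{A,B}$ that can be peeled off by reversing Lemma \ref{lem:construction-intro}. Specifically, we look for a component $K_{A,B}$ such that every block meeting $A\cup B$ is contained in $A\cup B$, except for blocks meeting both $A$ and $B$, and, more precisely, such that after merging we obtain the configuration $\cW$ of Lemma \ref{lem:refined-construction} with $(H,\cV)=(K_{A,B},\{A,B\})$.

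The natural candidate is a block $W$ and a component $K_{A,B}$ for which there are blocks $V_1'\supseteq A$ and $V_2'\supseteq B$ with $V_1',V_2'\subseteq A\cup B\cup X$, where $X$ is a set of vertices lying in components not otherwise touched. Merging $U_m:=(V_1'\setminus A)\cup(V_2'\setminus B)$ into a single block then yields $(G',\cU')$ with $m-1$ components and $m-1$ blocks. Lemma \ref{lem:refined-construction}, applied in its ``if'' direction since $(K_{A,B},\{A,B\})$ is minimally NIT (each side is nonempty, because $\recongraph$ is nonempty), shows that $(G',\cU')$ is minimally RGD. It plainly satisfies (b) and (c), so induction applies and gives the construction.

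The main work is showing that when no elementary assignment exists, such a peelable component is forced. For this we use the structure of IT's. Each IT $S$ meets each component $K_{A_j,B_j}$ on one side only, so $S$ determines a ``side pattern''. Since there are $m$ blocks and $m$ components, counting vertices of $S$ suggests a Hall-type argument: consider the bipartite auxiliary graph between blocks and components, with $U_i$ joined to $K_{A_j,B_j}$ whenever $U_i$ meets it. Minimal RGD together with block-criticality should force this structure to be tight. Deleting any block leaves a connected nonempty reconfiguration graph, which yields IT's avoiding prescribed sides. The disconnectedness of $\recongraph(G,\cU)$ should come from a ``frozen'' configuration: two IT's lying in different components must differ in which side they use on some component, and a side switch at $K_{A,B}$ is impossible exactly when the blocks are forced to cover both sides.

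I would take a pair of IT's $S,T$ in distinct components of $\recongraph$ and study the symmetric structure of components where their sides disagree. Following \cite{haxell2024constructing} and the Buys--Kang--Ozeki alternating-tree technique, we grow a minimal set $I$ of blocks and components forming a tight Hall configuration ($|I|$ blocks whose union lies within $|I|$ components). Minimality of $(G,\cU)$ forces $I=[m]$. Within this configuration, a leaf-like component (one with both sides fully occupied by exactly two blocks) gives the peelable $K_{A,B}$. If no such leaf exists, every component has a side inside a single block, which gives the elementary case.

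The main obstacle is this structural step: rigorously producing the leaf component from minimality and disconnectedness. In particular, one must rule out configurations in which blocks split sides of a component among three or more blocks without creating reconfiguration moves. I would first try a counting/extremal argument on an IT maximizing agreement with a fixed IT in another component, showing that any non-elementary, non-peelable configuration admits an exchange move connecting them.
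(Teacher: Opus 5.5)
\textbf{There is a genuine gap: the proposal does the easy reduction but not the structural step that carries the theorem.}

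Your reduction matches the paper's. If every block can be assigned its own component with a side inside it, the instance is elementary. Otherwise you peel off a component $K_{A,B}$ with $A\subseteq U_i$, $B\subseteq U_j$, $i\neq j$. You then merge $(U_i\cup U_j)\setminus(A\cup B)$ into one block and apply Lemma \ref{lem:refined-construction} and induction.

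Your extra side conditions (blocks meeting $A\cup B$ being contained in it, or the leftover set $X$ lying in ``untouched'' components) are not needed. Any two distinct blocks containing $A$ and $B$ respectively work, and the leftovers automatically lie outside $K_{A,B}$. Also, nonemptiness of $A$ and $B$ does not follow from $\recongraph(G,\cU)\neq\emptyset$. It follows from minimality: an isolated vertex in $U_i$ would make $\recongraph(G,\cU)$ connected via block-criticality at $U_i$.

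The whole content of the theorem is that a peelable component must exist when the instance is not elementary. Your last two paragraphs give a plan for this, not an argument, as you acknowledge yourself.

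What is missing is the paper's Theorem \ref{thm:equivalent-general}: for \emph{every} block $U_i$ there is a vertex $u$ with $N_G(u)\subseteq U_i$, i.e.\ a whole side of some component lies in $U_i$. Given this, the dichotomy is immediate because each component has only two sides. If no component has its two sides in two distinct blocks, the components assigned to distinct blocks are distinct, and you get the injection.

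Your proposed dichotomy does not reach this.
\begin{itemize}
\item ``Every component has a side inside a single block'' is not the elementary condition. That condition needs every \emph{block} to receive a side of its own component, and a map from components to blocks need not be onto.
\item A Hall-type argument on the graph joining $U_i$ to the components it merely \emph{meets} cannot detect containment of a whole side.
\end{itemize}

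The paper proves Theorem \ref{thm:equivalent-general} as follows.
\begin{itemize}
\item It takes $S,T$ in different components of $\recongraph(G,\cU)$ maximizing $|I(S\cap T)|$. It grows them by Algorithm \ref{alg:growing-tuple}, choosing $Q_x$ to minimize the neighbours of $x$, into a tuple whose block graph is a forest rooted at the blocks where $S,T$ differ.
\item Block-criticality forces $I(R)=[m]$. Condition (c), through a count of the components of $G[R]$, forces $G[R]$ to be an induced matching, so every vertex has exactly one neighbour in $R$.
\item Maximality of $|I(S\cap T)|$, via explicit reconfigurations along tree paths, gives Claims \ref{claim-10} and \ref{claim-11}. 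A vertex of $S\triangle T$ has all its neighbours in its partner's block. Neighbours of a tree vertex $w$ lie in $w$-descendant blocks.
\item If $S,T$ agree on $U_m$, the paper inducts on the degree of $U_m$ in the block forest. It replaces the matching edge $wv$ by $wx$ for a stray neighbour $x$ of the parent vertex $w$, which yields a new induced matching configuration. This reduces to the case where $U_m$ is a leaf, where $N_G(w)\subseteq U_m$.
\end{itemize}

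None of these ingredients appears in your outline. In particular, your outline has no use of condition (c) strong enough to force the matching structure, so the central step remains unproved.
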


From Theorem \ref{thm:more-general}, it is easy deduce Theorem \ref{thm:construction-universal}. Specifically, any instance $(G, \cU)$ from the hypothesis of Theorem \ref{thm:construction-universal} easily satisfies the requirements for Theorem \ref{thm:more-general}. Now we just need to check that at every iteration of applying Lemma \ref{lem:construction-intro}, the current graph $(G, \cU)$ must already have all its blocks with size $2\Delta$, and moreover the chosen block $U_m \in \cU$ must have $\Delta$ of its vertices be distributed to $A$ and the other $\Delta$ vertices distributed to $B$, where again $(H, \cV) = (K_{A,B}, \{A,B\})$ with $|A| = |B| = \Delta$. If one of these does not hold, then the resulting vertex-partitioned graph $(G \cup H, \cW)$ will have at least one block of size less than $2\Delta$, and it is easy to show that this would remain the case for all further iterations of Lemma \ref{lem:construction-intro}. Hence, Theorem \ref{thm:construction-universal} follows.

To prove Theorem \ref{thm:more-general}, we will actually prove the following equivalent theorem with a seemingly simpler conclusion.

\begin{thm} \label{thm:equivalent-general}
Let $G$ be a graph and let $\cU = \{U_1, \ldots, U_m\}$ be a partition of $V(G)$. Suppose that
\begin{itemize}
	\item[(a)] $(G, \cU)$ is minimally RGD,
	\item[(b)] $G$ is a disjoint union of complete bipartite graphs, and
	\item[(c)] $G$ has exactly $|\cU|$ connected components.
\end{itemize}
Then for every $U_i \in \cU$, there exists a complete bipartite component $K_{A,B}$ of $G$ such that $A \subseteq U_i$.
\end{thm}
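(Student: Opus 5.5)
Assume for contradiction that some block $U_1 \in \cU$ contains neither full side of any component $K_{A,B}$ of $G$. Then every component meets the complement of $U_1$ on both sides, and this slack is what I will use to reconfigure within $U_1$. The plan has four steps.

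\textbf{Step 1 (structure from minimality).} By (a), $\recongraph(G,\cU)$ is nonempty and has at least two components. Take IT's $S$ and $T$ in different components. By minimal RGD, $(G-U_1,\cU-\{U_1\})$ has a connected, nonempty reconfiguration graph. So $S-U_1$ and $T-U_1$ are joined by a reconfiguration sequence $R_0,\dots,R_N$ of IT's of $(G-U_1,\cU-\{U_1\})$.

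\textbf{Step 2 (lifting).} I lift this sequence to IT's of $(G,\cU)$ by attaching, at each stage $j$, a vertex $x_j \in U_1$ independent from $R_j$. The lift is $R_j\cup\{x_j\}$; consecutive lifts are adjacent once the $x_j$ are chosen compatibly. Along the way I use moves that change only the $U_1$-vertex.

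\textbf{Step 3 (counting).} Since $G$ is a disjoint union of complete bipartite graphs, the vertices of $U_1$ blocked by $R_j$ are, for each component $K_{A,B}$ hit by $R_j$, the part of $U_1$ on the side opposite to the hit vertices. Using (c), the $m-1$ vertices of $R_j$ lie in at most $m-1$ components. So at least one component $K$ is untouched by $R_j$, and the vertices of $U_1\cap V(K)$ are free.

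If $U_1$ meets an untouched component, I want a free vertex of $U_1$ independent of both $R_j$ and $R_{j+1}$. The difficulty is that the untouched component may be entirely disjoint from $U_1$. To handle this I use a Hall-type count. Say $R_j$ hits components $C_1,\dots,C_{m-1}$ with $m$ components total, and suppose every vertex of $U_1$ is blocked. The hypothesis that $U_1$ contains no full side means each hit component has vertices outside $U_1$ on both sides, so one side of it is available to other blocks. I would then show the IT's can be rerouted, using the connectivity of $\recongraph(G-U_i,\cU-\{U_i\})$ for other blocks $U_i$, so that the component meeting $U_1$ is freed.

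\textbf{Step 4 (conclusion and main obstacle).} If Steps 2 and 3 succeed, $S$ and $T$ are connected in $\recongraph(G,\cU)$, contradicting (a); hence some component has a full side inside $U_1$. The main obstacle is Step 3. I would try an extremal choice: among IT's in the component of $S$ and in the component of $T$, pick a pair minimizing the reconfiguration distance of their restrictions to $G-U_1$. I would then analyze the first step $R_0\to R_1$ where no common free vertex of $U_1$ exists. This forces every vertex of $U_1$ to be dominated by $R_0\cup R_1$.

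In that configuration the number of components and blocks is equal. Combined with $U_1$ avoiding full sides, a counting argument should show that the vertex changed in the step $R_0 \to R_1$ lies in a component $K_{A,B}$ with $A \subseteq U_1$, or else allows a detour. This mirrors the analysis in \cite{haxell2024constructing}. The resulting alternating or augmenting structure would be handled by induction on $m$: delete a block and use minimality, keeping track of which side of each $K_{A,B}$ the current IT occupies.
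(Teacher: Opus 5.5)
Your contrapositive frame (assume $U_1$ contains no full side, then lift a path of $\recongraph(G-U_1,\cU-\{U_1\})$ to a path of $\recongraph(G,\cU)$) is logically admissible. However, Step 3 carries the entire content of the theorem, and it is not supplied; the tools you name cannot supply it.

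The problems with the argument as written:

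\begin{itemize}
\item \emph{The pigeonhole is too weak.} Your only use of (c) is that some component is untouched by $R_j$. As you note, that component may miss $U_1$. The hypothesis ``no full side in $U_1$'' does not help, since it only says $A\not\subseteq U_1$ and $B\not\subseteq U_1$. $U_1$ may meet each component it touches on one side only, and an $R_j$ occupying the opposite side of each of those components blocks all of $U_1$. So an arbitrary path in $G-U_1$ does not lift and must be rebuilt. Your phrases ``rerouting using the connectivity of $\recongraph(G-U_i,\cU-\{U_i\})$ for other blocks'' and ``a counting argument should show\ldots'' are placeholders for exactly that rebuilding.
\item \emph{The Step 4 reduction has a hole.} A vertex $x\in U_1$ free for $R_0\cup R_1$ does not let you advance when the current representative $s$ is blocked by $R_1$ and $x$ is adjacent to $s$. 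Adjacency in $\recongraph$ requires $R_0\cup\{s,x\}$ to be independent. So ``no common free vertex'' is not the only obstruction.
\item \emph{The induction on $m$ does not type-check.} Deleting a block of a minimally RGD instance yields a connected reconfiguration graph by definition, and (c) need not survive the deletion. So the induction hypothesis cannot be applied.
\end{itemize}

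The missing idea is a rigid global structure in which (c) is used as an equality rather than a pigeonhole. The paper argues directly, not by contradiction.

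\begin{itemize}
\item It takes $S,T$ in different components of $\recongraph(G,\cU)$ maximizing $|I(S\cap T)|$.
\item It grows $R\supseteq S\triangle T$. At each stage it adds an undominated vertex $x$ of an already-touched block, together with its neighbours in a carefully chosen replacement $Q_x$ for the common part $S\cap T$. This continues until $R$ totally dominates every block it meets.
\item The extremal choices guarantee that each such $x$ has a neighbour in $Q_x$.
\item Minimality (a) forces $R$ to meet every block, and hence every component.
\item The number of components of $G$ is then at most that of $G[R]$, which is at most $|S|=m$. Condition (c) forces equality throughout.
\item Consequently $G[R]$ is an induced matching, every vertex of $G$ has exactly one neighbour in $R$, and the blocks form a forest rooted at $I(S\triangle T)$.
\end{itemize}

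Only with this structure do exchange arguments become available. Maximality of $|I(S\cap T)|$ shows that the $R$-partner $t$ of $s\in S-T$ has $N_G(t)$ inside the block of $s$. It also shows that every $w\in R-(S\cup T)$ has all its neighbours in $w$-descendant blocks. Edge swaps in the block forest then lower the degree of $U_m$ until some $u$ with $N_G(u)\subseteq U_m$ appears, giving a full side $A\subseteq U_m$.

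Your extremal parameter (distance of the restrictions in $G-U_1$) comes with no comparable exchange property. That is why Step 3 currently has nothing to stand on.
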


It is easy to see that Theorem \ref{thm:more-general} implies Theorem \ref{thm:equivalent-general}. Now we show the converse.

\begin{proof}[Proof of Theorem \ref{thm:more-general} from Theorem \ref{thm:equivalent-general}]
Let $(G, \cU)$ be an instance satisfying conditions (a), (b), (c) of Theorem \ref{thm:more-general}. By Theorem \ref{thm:equivalent-general}, for every $U_i \in \cU$ there exists a complete bipartite component $K_{A,B}$ of $G$ such that $A \subseteq U_i$. If one can associate a unique such component $K_{A_i,B_i}$ to every $U_i \in \cU$, then $(G, \cU)$ is an elementary non-reconfigurable instance and we are done. Otherwise, there exists a complete bipartite component $K_{A,B}$ of $G$ and two distinct blocks $U_i, U_j \in \cU$ such that $A \subseteq U_i$ and $B \subseteq U_j$. Now we form the graph $G' \coloneqq G - K_{A,B}$ and vertex partition $\cU' \coloneqq \cU - \{U_i, U_j\} \cup (\{(U_i \cup U_j) - (A \cup B)\})$. That is, we delete $K_{A,B}$ from $G$ and we combine the remaining elements of $U_i$ and $U_j$ into a single block. Notice that $(G, \cU)$ can be obtained by applying Lemma \ref{lem:construction-intro} to $(G', \cU')$ and $(K_{A,B}, \{A,B\})$. Moreover, $(G', \cU')$ is again minimally RGD by Lemma \ref{lem:refined-construction}, and $G$ is a disjoint union of $|\cU'|$ complete bipartite graphs. By induction, $(G', \cU')$ can be constructed using Lemma \ref{lem:construction-intro}, by starting with an elementary non-reconfigurable instance and iteratively adding complete bipartite graphs with the standard bipartition. We conclude that the same is true for $(G,\cU)$.
\end{proof}

Theorems \ref{thm:more-general} and \ref{thm:equivalent-general} are analogues of \cite[Theorem 19 and Lemma 28]{haxell2024constructing}. We end this section with a remark about how instances $(G, \cU)$ satisfying conditions (a), (b), (c) of Theorem \ref{thm:more-general} can be recognized in polynomial time. 

\begin{remark} \label{rmk:poly-time}
Algorithm \ref{alg:bad-instance} below describes a polynomial-time scheme for recognizing bad instances $(G, \cU)$ associated to Problem \ref{prob:main-problem} (assuming the truth of Theorem \ref{thm:equivalent-general}). Note that in Step 6, we use the fact that $\recongraph(G,\cU)$ is automatically disconnected by Lemma \ref{lem:minimal-non-reconfigurable}. If we only care about whether $\recongraph(G,\cU)$ is disconnected, and not about the block-criticality, we may simply return YES at Step 6. Also, by keeping track of the deleted copies of $K_{\Delta,\Delta}$ and blocks, we may deduce how to reconstruct the original bad instance $(G,\cU)$ through iterative applications of Lemma \ref{lem:construction-intro}. The strategy of Algorithm \ref{alg:bad-instance} works more generally for recognizing instances $(G,\cU)$ satisfying the conditions of Theorem \ref{thm:equivalent-general}, but Step 6 requires modification because the equivalence of irreducibility and block-criticality does not hold in this general setting.

\begin{algorithm}
\caption{Recognizing bad instances} \label{alg:bad-instance}
\begin{itemize}
	\item[1.] Verify that $G$ is a disjoint union of $|\cU|$ complete bipartite graphs $K_{\Delta,\Delta}$ and that $|U_i| = 2\Delta$ for all $i$. If not, then return NO.
	\item[2.] Check that for every block $U_i \in \cU$, there is some complete bipartite component $K_{A_i,B_i}$ of $G$ satisfying $A_i \subseteq U_i$. If not, then return NO.
	\item[3.] If one can find a unique such component $K_{A_i,B_i}$ for every block $U_i$ (which is simply a bipartite matching problem), then skip to Step 6.
	\item[4.] Otherwise, find a component $K_{A,B}$ and distinct blocks $U_i, U_j$ such that $A \subseteq U_i$ and $B \subseteq U_j$.
	\item[5.] Update $G \leftarrow G - K_{A,B}$ and $\cU \leftarrow \cU - \{U_i, U_j\} \cup \{U'\}$, where $U' = (U_i \cup U_j) - (A \cup B)$. Go back to Step 1.
	\item[6.] Now we check that $(G, \cU)$ is an elementary non-reconfigurable instance. It is enough to check that $(G,\cU)$ is irreducible, i.e., we cannot find nonempty sub-instances $(G_1,\cU_1)$ and $(G_2,\cU_2)$ with $G = G_1 \cup G_2$ and $\cU = \cU_1 \cup \cU_2$. This can be done through a breadth-first search exploration of blocks connected by edges of $G$. If $(G, \cU)$ is irreducible then return YES, and otherwise return NO.
\end{itemize}
\end{algorithm}

\end{remark}

\section{Proof of Theorem \ref{thm:equivalent-general}} \label{sec:main-proof}

In this section, we prove Theorem \ref{thm:equivalent-general}. As explained in Section \ref{subsec:generalized-setting}, doing this would imply Theorem \ref{thm:more-general} and hence Theorem \ref{thm:construction-universal}. 

As mentioned before, the basic strategy and structure of our proof is similar to that of \cite[Lemma 28]{haxell2024constructing}, which is the analogous result in the existence setting and which utilized previous combinatorial work about IT's in graphs \cite{aharoni2007independent, haxell2006odd, haxell1995condition}. Roughly speaking, we will use the condition that the reconfiguration graph is disconnected, together with the other conditions of Theorem \ref{thm:equivalent-general}, to build some spanning combinatorial structure in our graph. This structure will basically be a special matching. Then we will show how properties and manipulations of this matching eventually yield the conclusion of Theorem \ref{thm:equivalent-general}. 

There will be some differences in our current proof. We will not be able to switch vertices along alternating paths as directly as in \cite{haxell2024constructing}, so our later analysis will be partly centered around finding a ``free" vertex that may enable similar switches, and consequently our proofs will be slightly lengthier. Despite such differences, the high level ideas from \cite{haxell2024constructing} transcribe remarkably smoothly. Our modified technical setup is inspired from \cite{buys2025reconfiguration, wdowinski2025hall}, where these ideas were used to combinatorially prove (variants of) Theorem \ref{thm:BKO}. Since we need to analyze the associated combinatorial structures rather closely, our exposition will be more elaborate.

For notation, the symmetric difference of two sets $S, T$ is denoted by $S \triangle T \coloneqq (S - T) \cup (T - S)$. For terminology, in a graph $G$ we say that a vertex set $X \subseteq V(G)$ \textit{totally dominates} a vertex set $Y \subseteq V(G)$ if every vertex of $Y$ is adjacent to some vertex of $X$. We use the same terminology when $Y$ is a vertex. Total domination is slightly different from the usual notion of graph domination, where the latter requires that every vertex of $Y$ either lie in $X$ or be adjacent to some vertex of $X$.

\subsection{Constructing induced matching configurations}

Let $G$ be a graph, let $\cU = \{U_1, \ldots, U_m\}$ be a partition of $V(G)$, and assume that $(G,\cU)$ has at least one IT. For a vertex subset $X \subseteq V(G)$, we denote 
\begin{equation*}
	I(X) \coloneqq \{i \in [m] : X \cap U_i \neq \emptyset\}.
\end{equation*}
For a vertex subset $X \subseteq V(G)$, we define the \textit{block graph} $\mathcal{G}_X$ to be the graph obtained from $G[X]$ by identifying all vertices of $U_i \cap X$ into a single vertex, which we still call $U_i$, for all $i \in I(X)$, while preserving all the induced edges. Actually, the block graph is generally a multigraph, as vertex identifications might introduce loops or parallel edges.

We introduce the following somewhat technical definition of a feasible 3-tuple, but it represents a rather intuitive concept. See Figure \ref{fig:feasible} for an illustration.

\begin{defin}
Given the instance $(G, \cU)$, a 3-tuple $(R, S, T)$ is called \textit{feasible} if it satisfies the following four properties:
\begin{itemize}
	\item[(i)] $R \subseteq V(G)$, and $S, T$ are IT's in two distinct connected components $C_0, C_1$ of $\recongraph(G, \cU)$ such that $|I(S \cap T)|$ is maximum among IT's from $C_0, C_1$;
	\item[(ii)] $S \triangle T \subseteq R$ and $I(R \cap S) = I(R \cap T) = I(R)$;
	\item[(iii)] $G[R - (S \triangle T)]$ is a forest whose components are $|R - (S \cup T)|$ non-trivial stars with centers lying in $R - (S \cup T)$ and remaining vertices lying in $S \cap T$;
	\item[(iv)] $\mathcal{G}_{R - (S - T)}$ is a forest on vertex set $\{U_i : i \in I(R)\}$, whose components are naturally rooted at vertices $U_j$ for $j \in I(S \triangle T)$.
\end{itemize}
\end{defin}

\begin{figure}
	\begin{center}
		\includegraphics[scale=0.75]{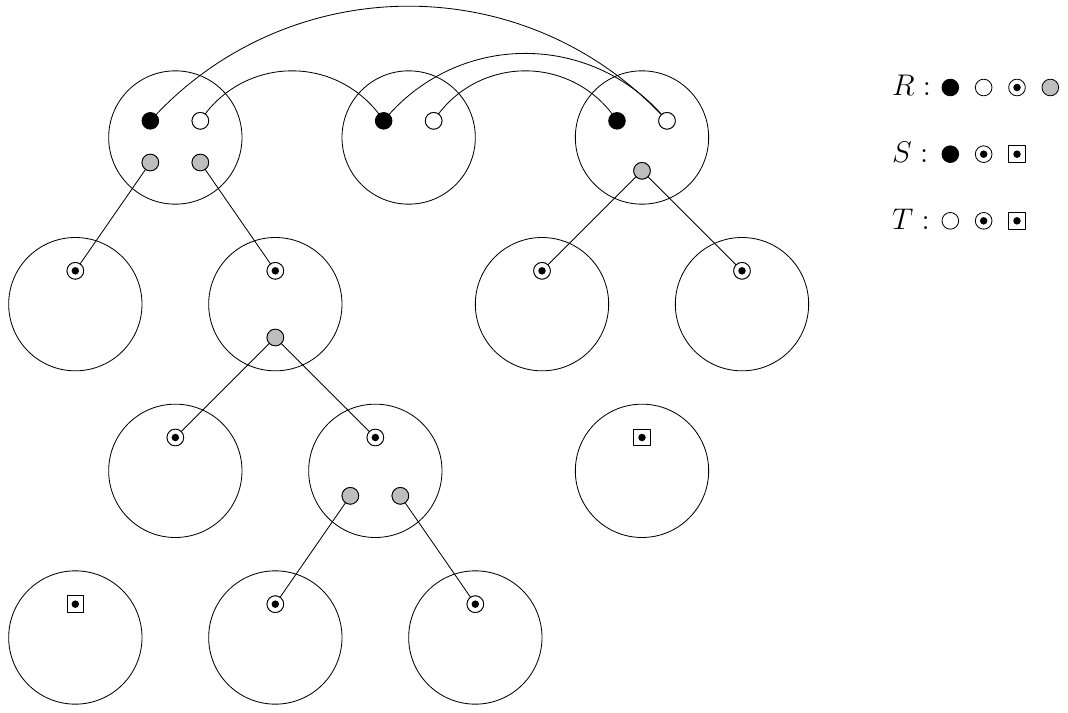}
	\end{center}
	\caption{A feasible 3-tuple $(R,S,T)$. The top three blocks represent the roots of the forest $\mathcal{G}_{R - (S - T)}$.}
	\label{fig:feasible}
\end{figure}

\begin{figure}
	\begin{center}
		\includegraphics[scale=0.75]{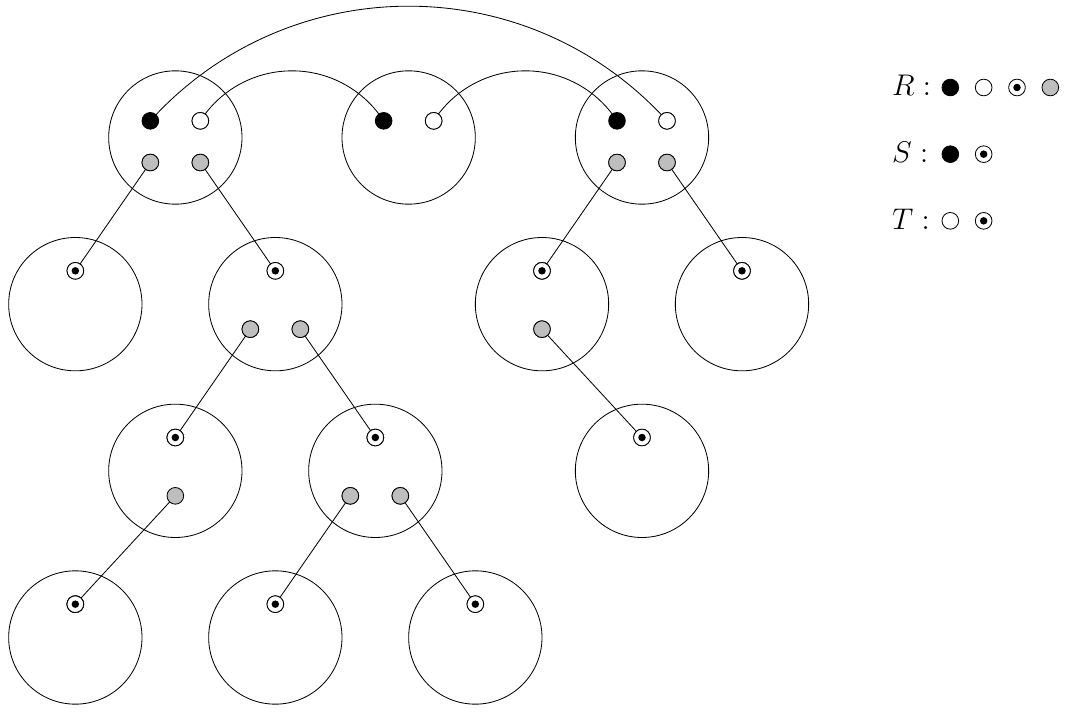}
	\end{center}
	\caption{An induced matching configuration $(R,S,T)$. We have $I(R) = I(S \cup T)$ and $G[R]$ is a matching.}
	\label{fig:matching-config}
\end{figure}

Assuming that the vertex-partitioned graph $(G, \cU)$ satisfies conditions (a), (b), (c) of Theorem \ref{thm:equivalent-general}, the goal in this subsection is to construct a special feasible 3-tuple $(R,S,T)$ that is defined below, analogous to but different from a concept in the existence setting \cite{haxell2006odd, haxell2024constructing, tang2025number} with the same name. See Figure \ref{fig:matching-config} for an illustration.

\begin{defin}
Given $(G, \cU)$, a feasible 3-tuple $(R, S, T)$ is called a \textit{induced matching configuration} (\matchingconfig) if it satisfies the additional properties that $I(R) = I(S \cup T) = [m]$ (so that $S \cup T \subseteq R$) and the induced subgraph $G[R]$ is a matching.
\end{defin}

Assume that $\recongraph(G,\cU)$ is disconnected. Fix any two distinct connected components $C_0, C_1$ of $\recongraph(G, \cU)$, and then choose any IT's $S \in C_0$, $T \in C_1$ that maximize $|I(S \cap T)|$. Then $(R,S,T) = (S \triangle T, S, T)$ is a feasible 3-tuple. Toward the goal of making it an \matchingconfig, we use Algorithm \ref{alg:growing-tuple} below to grow the vertex set $R$ in the feasible 3-tuple.

\begin{algorithm}
\caption{Growing feasible 3-tuples} \label{alg:growing-tuple}
\begin{enumerate}
	\item Start with the tuple $(R, S, T) \leftarrow (S \triangle T, S, T)$.
	\item While $R$ does not totally dominate $\bigcup_{i \in I(R)} U_i$:
	\begin{itemize}
		\item[a.] Choose any vertex $x \in \bigcup_{i \in I(R)} U_i$ not totally dominated by $R$.
		\item[b.] Among all IT's $Q_x$ of $(G[\bigcup_{i \in I(S \cap T)} U_i], \{U_i : i \in I(S \cap T)\})$ such that 
		\begin{itemize}
			\item[i.] $Q_x$ agrees with $S \cap T$ on the blocks indexed by $I(R)$,
			\item[ii.] there are no edges between $Q_x - R$ and $R$, and
			\item[iii.] $(S - T) \cup Q_x \in C_0$ and $(T - S) \cup Q_x \in C_1$,
		\end{itemize}
		choose one so that $x$ contains the minimum number of neighbors in $Q_x$.
		\item[c.] Update $R \leftarrow R \cup \{w\} \cup (Q_x \cap N(x))$ and $S \leftarrow (R \cap S) \cup Q_x$ and $T \leftarrow (R \cap T) \cup Q_x$ to form tuple $(R,S,T)$.
	\end{itemize}
	\item Return $(R,S,T)$.
\end{enumerate}
\end{algorithm}

Note that in Algorithm \ref{alg:growing-tuple}, although the IT's $S, T$ might change after a given iteration of the while loop, the index set $I(S \cap T)$ always remains the same. We prove in Claim \ref{claim-1} below that the set $Q_x$ in step 2b exists and satisfies $Q_x \cap N(x) \neq \emptyset$ at every iteration. Then, because the vertex set $R$ grows at every iteration, we get that Algorithm \ref{alg:growing-tuple} eventually terminates.

\begin{claim} \label{claim-1}
In Algorithm \ref{alg:growing-tuple}, a feasible 3-tuple is maintained at every iteration.
\end{claim}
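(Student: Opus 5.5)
We argue by induction on the number of iterations of the while loop. The base case is the initial tuple $(S \triangle T, S, T)$, where $S \in C_0$, $T \in C_1$ maximize $|I(S \cap T)|$. Properties (i)--(iv) are immediate here: $R - (S \triangle T) = \emptyset$, so (iii) holds vacuously. Since $S,T$ are transversals, $I(R \cap S) = I(S - T) = I(S \triangle T) = I(T - S)$, which gives (ii). Finally, $\mathcal{G}_{R - (S-T)} = \mathcal{G}_{T - S}$ has vertex set $\{U_j : j \in I(S \triangle T)\}$ and no edges, because $T$ is independent; every vertex is therefore a root, which gives (iv).

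For the inductive step, assume $(R,S,T)$ is feasible and $x \in U_k$ with $k \in I(R)$ is not totally dominated by $R$. In step 2c the vertex written $w$ should presumably be $x$, and we read the update as $R' = R \cup \{x\} \cup (Q_x \cap N(x))$.

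\textbf{Step 1: $Q_x$ exists.} The set $Q = S \cap T$ itself satisfies conditions i--iii. Condition i is trivial. For condition ii, $Q - R$ has no edges to $R$: by (iii) and (ii), $R$ consists of $S \triangle T$, star centers, and leaves in $S\cap T$; vertices of $S \cap T$ have no neighbors in $S \cup T$; and a star center $c$ adjacent to some $q \in (S\cap T) - R$ would contradict the fact that the stars are exactly the components of $G[R-(S\triangle T)]$. Hmm — that last point only constrains edges inside $R$. I would instead maintain, as an extra invariant, that star centers were chosen as vertices $x$ not dominated at their insertion time with all their $Q$-neighbors added to $R$. This forces all neighbors of a center in the current $S\cap T$ to lie in $R$. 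Condition iii holds because $S \in C_0$ and $T \in C_1$.

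\textbf{Step 2: $Q_x \cap N(x) \ne \emptyset$.} This is the main obstacle. Suppose the minimizing $Q_x$ has no neighbor of $x$. Since $x$ also has no neighbor in $R$, we can reconfigure. Walk down the forest $\mathcal{G}_{R-(S-T)}$ from $U_k$ toward its root $U_j$, $j \in I(S\triangle T)$. Along this path, successively swap in $x$ at $U_k$, then the star center that dominated the vertex we displaced, and so on; each move is an edge of $\recongraph$. The forest structure from (iii) and (iv) ensures each swapped-in vertex has no remaining neighbor. Performing this in $S' = (S - T)\cup Q_x \in C_0$, we eventually change the vertex at the root block. This produces an IT in $C_0$ agreeing with $T' = (T-S) \cup Q_x$ on one more block, or else connects $C_0$ to $C_1$. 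Either way we contradict the maximality of $|I(S\cap T)|$ in (i), or the distinctness of the components. Making this chain argument precise, namely tracking that the replaced vertices are exactly leaves of stars whose centers lie in ancestor blocks, is where the forest bookkeeping is needed. I expect this to be the delicate part, mirroring the analogous argument in \cite{haxell2024constructing}.

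\textbf{Step 3: verify (i)--(iv) for the new tuple.} For (i), the new $S,T$ lie in $C_0, C_1$ by condition iii and have the same $|I(S\cap T)|$. Property (ii) holds because we add $x$ and vertices of $Q_x \subseteq S\cap T$ in new blocks; each such block now lies in $I(R\cap S)$, and $x$'s block is already in $I(R)$. For (iii), $x$ becomes a new star center whose leaves $Q_x \cap N(x)$ are nonempty by Step 2. Condition ii and independence of $Q_x$ ensure no other edges arise, and the minimality of $|N(x)\cap Q_x|$ is not needed here. For (iv), the new leaves lie in previously unused blocks by condition i. Each new block $U_i$ attaches as a child to $U_k$ via exactly one edge $x$--$q$, since $x$ is the only new vertex outside $S\cup T$ and the block of $q$ was not in $I(R)$. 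Hence the block graph remains a forest with the same roots.
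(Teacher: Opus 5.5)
Your Steps 1 and 3 are sound. In particular, the invariant you add in Step 1 (every neighbor of a star center that lies in the current $S\cap T$ is already in $R$) is exactly what justifies that $S\cap T$ satisfies condition ii.

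The gap is in Step 2. The assertion that the forest structure from (iii) and (iv) ensures each swapped-in vertex has no remaining neighbor is false. Property (iii) only says the components of $G[R-(S\triangle T)]$ are stars, and a center $y$ may have several leaves. After you replace the vertex $v\in U_k\cap S\cap T$ by $x$, the other leaves of $y$ are still in your current IT, so $y$ cannot be swapped in at the parent block. The chain argument you describe is the one the paper uses in Claims~\ref{claim-10} and~\ref{claim-11}. There it is applied only after Lemma~\ref{lem:induced-matching} has shown that $G[R]$ is a matching, i.e.\ every star has a single leaf, and that lemma rests on the present claim. Also, to gain a common block you would have to run the chain in both $(S-T)\cup Q_x$ and $(T-S)\cup Q_x$, not only in the first.

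In fact no argument that ignores the minimality rule of step 2b can succeed, because the claim fails without it. Take $U_0=\{s_0,t_0,a\}$, $U_1=\{v_1,x\}$, $U_2=\{v_2\}$ with edges $s_0t_0$, $av_1$, $av_2$, and $S=\{s_0,v_1,v_2\}$, $T=\{t_0,v_1,v_2\}$. Suppose $a$ is processed with the admissible but non-minimizing choice $Q_a=\{v_1,v_2\}$. Then $x$ is undominated, $Q_x=\{v_1,v_2\}$ is forced, and $x$ has no neighbor in it. Your chain stalls because $a$ is adjacent to $v_2$.

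The missing idea is to invoke minimality for the earlier center $y$ of $v$'s star, not for $x$. At the iteration where $y$ was processed, $U_k$ was not yet in $I(R)$, and $Q_x\cup\{x\}-\{v\}$ was an admissible choice there. It is one reconfiguration step from $(S-T)\cup Q_x$ and from $(T-S)\cup Q_x$, since $x$ has no neighbor in $R\cup Q_x$. Moreover, $N(y)\cap Q_x$ is exactly the leaf set $N(y)\cap Q_y$, and $y\notin N(x)$. So $y$ has one fewer neighbor in $Q_x\cup\{x\}-\{v\}$ than in $Q_y$, contradicting the choice of $Q_y$; this is the paper's argument. Only the root case $k\in I(S\triangle T)$ is a direct contradiction with the maximality of $|I(S\cap T)|$, obtained by swapping $x$ into both $(S-T)\cup Q_x$ and $(T-S)\cup Q_x$.
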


\begin{proof}
Our initial 3-tuple $(S \triangle T, S, T)$ is feasible. Now assuming that $(R,S,T)$ is a feasible 3-tuple at the start of a given iteration, we show that the updated 3-tuple $(R',S',T') \coloneqq (R \cup \{x\} \cup (Q_x \cap N(x)), (S - T) \cup Q_x, (T - S) \cup Q_x)$ is also feasible, where $x$ is from step 3a and $Q_x$ is from step 3b. We observe that such an IT $Q_x$ exists for any chosen $x$ because the set $S \cap T$ satisfies the stated requirements. Properties (i) and (ii) of feasible 3-tuples are still satisfied by construction.

To show that property (iii) is satisfied, it is enough to show that $x$ has at least one neighbor in $Q_x$. If we can show this, then observing that $R' - (S' \triangle T') = (R - (S \triangle T)) \cup \{x\} \cup (Q_x \cap N(x))$ and recalling neither $x$ nor any vertex in $Q_x$ is not dominated by $R$, the graph $G[R' - (S' \triangle T')]$ will contain one more non-trivial star with center $x \in R' - (S' \cup T')$ and remaining vertices in $S' \cap T' = Q_x$. So suppose for contradiction that $x$ has no neighbors in $Q_x$, and that this is the earliest iteration for which this occurs. If $x \in U_j$ for some $j \in I(S \triangle T)$, then we could have chosen $S_0 \coloneqq S \cup \{x\} - \{s\}$ and $T_0 \coloneqq T \cup \{x\} - \{t\}$ at step 1 of the procedure, where $s \in U_j \cap S$ and $t \in U_j \cap T$, as these IT's satisfy $S_0 \in C_0$, $T_0 \in C_1$, and $|I(S_0 \cap T_0)| > |I(S \cap T)|$. This contradicts the original choices of $S$ and $T$. Thus, $x \in U_j$ for some $j \in I(R - (S \triangle T))$. Say that $v \in U_j \cap S = U_j \cap T$, and that $v$ has neighbor $y \in R$. This means that $y$ was chosen at an earlier iteration of step 3a, and $v$ lies in the IT $Q_y$ from step 3b at that iteration. But then we could have replaced $Q_y$ by $Q_x \cup \{x\} - \{v\}$, and $y$ has fewer neighbors in $Q_x \cup \{x\} - \{v\}$ than it does in $Q_y$, contradicting the choice of $Q_y$ at that earlier iteration. This proves property (iii).

Finally, we verify property (iv). Initially $\mathcal{G}_{R - (S - T)}$ consists only of the roots $U_j$ for $j \in I(S \triangle T)$. In an iteration of the while loop, the chosen vertex $x$ in step 3b lies in $U_i$ for some $i \in I(R)$, while all of its neighbors in the IT $Q_x$ lie in blocks $U_j$ with $j \notin I(R)$. Thus, $\mathcal{G}_{R' - (S' - T')}$ is obtained from $\mathcal{G}_{R - (S - T)}$ by connecting the vertices $U_j$, for $j \in I(Q_x \cap N(x))$, as leaves to the vertex $U_i$.
\end{proof}

Claim \ref{claim-1} thus establishes the validity of Algorithm \ref{alg:growing-tuple}. The goal for rest of this subsection is to prove the following essential lemma.

\begin{lem} \label{lem:induced-matching}
Suppose that $(G, \cU)$ satisfies conditions (a), (b), (c) of Theorem \ref{thm:equivalent-general}. Consider any two IT's $S', T'$ that lie in fixed distinct connected components of $\recongraph(G,\cU)$ and with $|S' \cap T'|$ maximum among such IT's. If we apply Algorithm \ref{alg:growing-tuple} to input $S', T'$, then any output $(R,S,T)$ is an \matchingconfig.
\end{lem}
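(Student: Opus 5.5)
The plan is to analyse the terminal tuple $(R,S,T)$ of Algorithm \ref{alg:growing-tuple} through the one extra property that termination gives us: $R$ totally dominates $\bigcup_{i\in I(R)}U_i$. By Claim \ref{claim-1}, $(R,S,T)$ is feasible.

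\emph{Step 1: count the vertices of $R$.} Put $k=|S-T|=|T-S|$, so $|I(S\triangle T)|=k$. By (ii), $R\cap S\cap T$ consists of exactly one vertex in each of the $|I(R)|-k$ non-root blocks. These vertices are precisely the leaves of the stars in (iii), and they correspond bijectively to the edges of the forest $\mathcal{G}_{R-(S-T)}$ in (iv). Writing $c=|R-(S\cup T)|$ for the number of star centres, we get
\[|R| = 2k + c + (|I(R)|-k),\qquad c\le |I(R)|-k .\]
Hence $|R|\le 2|I(R)|$, with equality exactly when every star has a single leaf.

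\emph{Step 2: a component count.} Let $K_{A,B}$ be a component of $G$ that meets $\bigcup_{i\in I(R)}U_i$, say in a vertex of $A$. Total domination forces $R\cap B\neq\emptyset$. Since every vertex of $R$ lies in some block of $I(R)$, the symmetric argument gives $R\cap A\neq\emptyset$, so $G[R]$ has an edge inside $K_{A,B}$. Now split $R$ into the independent sets $R\cap S$ and $R\cap T$ and use the star structure. From these, each component containing vertices of $R$ must absorb at least two vertices of $R$: one vertex of $R\cap S$ or $R\cap T$, together with a centre or a vertex of the other IT. Therefore the number $p$ of components meeting $\bigcup_{i\in I(R)}U_i$ satisfies $p\le |R|/2\le |I(R)|$, with equality only if $G[R]$ is a perfect matching.

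\emph{Step 3: use hypotheses (a) and (c) to force equality and $I(R)=[m]$.} The blocks indexed by $[m]-I(R)$ avoid $R$. Consider the sub-instance on these blocks, together with the components not meeting $\bigcup_{i\in I(R)}U_i$. If $p<|I(R)|$, or if $I(R)\neq[m]$, I would like to show that some block $U_j$ can be deleted while $S-U_j$ and $T-U_j$ stay in different components of $\recongraph(G-U_j,\cU-\{U_j\})$. This would contradict minimal RGD-ness in (a).

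The mechanism I have in mind is a "freezing" argument. Because $R$ totally dominates its blocks and $G$ is complete bipartite, along any reconfiguration sequence starting from an IT that agrees with $S$ on $I(R)$, the choices on $I(R)$ can only move within the $C_0$-type configurations. So the part on $I(R)$ separates $C_0$ from $C_1$ regardless of what happens on the remaining blocks. If this works, then $I(R)=[m]$, all inequalities in Steps 1--2 are tight, and $G[R]$ is a matching. That is exactly an \matchingconfig.

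\emph{Main obstacle.} I expect Step 3 to be the hard part: making the freezing argument rigorous, and in particular showing that deleting an outside block does not reconnect $C_0$ and $C_1$. To do this I would first try the maximality of $|I(S\cap T)|$ together with the minimality in step 2b. The idea is to take a shortest reconfiguration sequence in the smaller instance and locate the first step that changes a vertex in a block of $I(R)$. Total domination by $R$, plus the complete bipartite structure, should then let us reroute that step to a pair of IT's in $C_0,C_1$ with a larger common index set, contradicting the choice of $S'$ and $T'$.
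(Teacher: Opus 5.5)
\textbf{Steps 1--2 are fine; Step 3 is a genuine gap.} Your counting is sound. $R$ totally dominates $\bigcup_{i\in I(R)}U_i\supseteq R$, so $G[R]$ has an edge in every component meeting these blocks. Combined with $|R|\le 2|I(R)|$, this gives $p\le |R|/2\le |I(R)|$, with equality forcing a perfect matching. This is a valid, slightly leaner replacement for the paper's component count in Claim 7, which relies on Claim 2 instead. Note that once $I(R)=[m]$ is known, every component of $G$ meets $R$, so $p$ equals the number of components of $G$, which is $m$ by (c). The case ``$p<|I(R)|$'' therefore needs no deletion argument.

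The problem is that $I(R)=[m]$ is the substantive part of the lemma, and you only hope for it. Total domination plus complete bipartiteness does not by itself justify ``freezing''. Your proposed mechanism does not close it either. That mechanism takes a shortest sequence in $G-U_j$ and reroutes its first $I(R)$-changing step to contradict maximality of $|I(S\cap T)|$. But the members of that sequence are not IT's of $G$, and nothing in the sketch produces a pair in $C_0,C_1$ with larger common index set. Maximality is not what drives this step.

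\textbf{The missing idea.} The outside part $(S\cap T)-R$ of $S$ and $T$ is non-adjacent to every vertex of $\bigcup_{i\in I(R)}U_i$. This requires the specific structure of Algorithm~2.
\begin{enumerate}
\item Initially $S\cap T$ has no neighbor in $S\triangle T$.
\item After each iteration, $(S\cap T)-R$ equals $Q_x-R-\{x\}-N(x)$. By condition ii of step 2b and the independence of $Q_x$, this set again has no neighbor in the updated $R$.
\item By (b) and total domination, every component of $G$ meeting $\bigcup_{i\in I(R)}U_i$ contains vertices of $R$ on both sides of its bipartition.
\item So a vertex of $(S\cap T)-R$ lying in such a component would have a neighbor in $R$, which is impossible. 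Hence these vertices lie in components of $G$ disjoint from $\bigcup_{i\in I(R)}U_i$; this is the content of the paper's Claims 3 and 4.
\end{enumerate}

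\textbf{Finishing with this fact.}
\begin{enumerate}
\item Adjoining $(S\cap T)-R$ lifts any reconfiguration sequence of the restricted instance on the blocks $I(R)$, from $R\cap S$ to $R\cap T$, to one from $S$ to $T$. Hence $R\cap S$ and $R\cap T$ lie in different components of the restricted reconfiguration graph (Claim 5).
\item For $j\notin I(R)$, restrict any reconfiguration sequence of $(G-U_j,\cU-\{U_j\})$ to the blocks of $I(R)$. This gives a possibly stuttering reconfiguration sequence of the restricted instance.
\item So $S-U_j$ and $T-U_j$ remain in different components, contradicting (a). This yields $I(R)=[m]$, and your Steps 1--2 with (c) then finish the proof.
\end{enumerate}
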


We prove Lemma \ref{lem:induced-matching} using the following sequence of claims, explicitly noting where we use conditions (a), (b), (c) of Theorem \ref{thm:equivalent-general}. Let $(R,S,T)$ denote an output of Algorithm \ref{alg:growing-tuple}.

\begin{claim} \label{claim-2}
The induced subgraph $G[S \triangle T]$ has no isolated vertices.
\end{claim}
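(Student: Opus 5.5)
We argue by contradiction using the maximality of $|I(S\cap T)|$ in property (i). First we note that Algorithm \ref{alg:growing-tuple} never changes $S-T$ and $T-S$, so $S\triangle T$ for the output $(R,S,T)$ coincides with $S'\triangle T'$ for the input. Moreover, $S\in C_0$, $T\in C_1$, and $I(S\cap T)=I(S'\cap T')$. It therefore suffices to work with $S'$ and $T'$, which are chosen to maximize the overlap among IT's of the two fixed components; condition (i) of feasibility records exactly this maximality.

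Suppose for contradiction that some vertex $s\in S\triangle T$ is isolated in $G[S\triangle T]$. By symmetry we may take $s\in S-T$, with $s\in U_j$ for some $j\in I(S\triangle T)$. Let $t$ be the vertex of $T$ in $U_j$, so $t\in T-S$ and $s\neq t$. We split into two cases according to the neighbours of $s$ in $T$.

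\textbf{Case 1: $s$ has no neighbour in $T$.} Then $T_0 \coloneqq T-\{t\}\cup\{s\}$ is an independent set containing one vertex from each block, so it is an IT. Since $s\neq t$ and $s$ is not adjacent to $T-\{t\}$, we can check that $T\cup\{s\}$ is independent, provided $s$ and $t$ are non-adjacent. Granting this, $T_0$ is adjacent to $T$ in $\recongraph(G,\cU)$, so $T_0\in C_1$. But $T_0\cap S$ contains $(S\cap T)\cup\{s\}$, so $|I(S\cap T_0)|>|I(S\cap T)|$. This contradicts maximality.

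\textbf{Case 2: $s$ has a neighbour in $T$.} Any such neighbour must lie in $S\cap T$, or be $t$ itself, since $s$ is isolated in $G[S\triangle T]$. A neighbour in $S\cap T\subseteq S$ is impossible because $S$ is independent. So the only possibility is that $s$ is adjacent to $t$. But $t\in S\triangle T$, so this contradicts the assumption that $s$ is isolated.

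Hence $s$ has no neighbours in $T$ at all, Case 1 applies, and we obtain the contradiction. This argument uses only property (i) together with the fact that $S\triangle T$ is preserved by the algorithm, and not conditions (a)--(c).

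The main point requiring care is the bookkeeping: verifying that the output tuple still satisfies the maximality in (i). This holds because $S-T$ and $T-S$ are untouched by the algorithm and the new $S$ and $T$ stay in $C_0$ and $C_1$ by step 2b(iii). The swap step itself is routine.
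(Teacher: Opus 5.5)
Your proof is correct and is the paper's argument: swap the isolated vertex $s \in U_j$ into the other IT. This gives an adjacent IT in the same component whose overlap with the first IT is strictly larger, contradicting the maximality in property (i). The paper writes the swap symmetrically as $S_0 = S \cup \{x\} - \{s\}$, $T_0 = T \cup \{x\} - \{t\}$, one of which is trivial; your case analysis and bookkeeping just make explicit the independence check and the preservation of maximality that the paper leaves implicit.
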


\begin{proof}
If $G[S \triangle T]$ had an isolated vertex $x \in U_j$ (where $j \in I(S \triangle T)$), then we could have started with $S_0 \coloneqq S \cup \{x\} - \{s\}$ and $T_0 \coloneqq T \cup \{x\} - \{t\}$ in place of $S$ and $T$ at step 1, where $s \in U_j \cap S$ and $t \in U_j \cap T$. These satisfy $S_0 \in C_0$, $T_0 \in C_1$, and $|I(S_0 \cap T_0)| > |I(S \cap T)|$, which contradicts the original choice of $S$ and $T$. 
\end{proof}

\begin{claim} \label{claim-3}
Every vertex in $\bigcup_{i \in I(R)} U_i$ lies in a component of $G$ that intersects $R$.
\end{claim}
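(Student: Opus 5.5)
The plan is to read Claim \ref{claim-3} directly off the termination condition of Algorithm \ref{alg:growing-tuple}. It should not need conditions (a), (b), (c) of Theorem \ref{thm:equivalent-general} beyond what already guarantees that the algorithm runs and terminates.

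First, recall from the discussion after Algorithm \ref{alg:growing-tuple} and Claim \ref{claim-1} that the algorithm terminates. Every iteration adds at least the new vertex $x$ to $R$, and $V(G)$ is finite. So the output $(R,S,T)$ is produced exactly when the while-loop condition in step 2 fails. That is, at termination $R$ totally dominates $\bigcup_{i \in I(R)} U_i$.

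Now take any vertex $v \in \bigcup_{i \in I(R)} U_i$. By total domination, $v$ has a neighbor $y \in R$. Since $vy$ is an edge of $G$, the vertices $v$ and $y$ lie in the same connected component $K$ of $G$, and $y \in K \cap R$. Hence the component of $v$ intersects $R$, which proves the claim.

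I do not expect any real obstacle here. The one point to state explicitly is that "totally dominates" demands an actual neighbor in $R$, not merely membership in $R$. This is what makes the argument work even for vertices of $R$ itself, since such a vertex also has a neighbor in $R$. It is also worth remarking that the connected-component language is chosen with the complete bipartite structure (b) and (c) in mind, for the later claims: by this claim, the components of $G$ meeting the blocks indexed by $I(R)$ are exactly those meeting $R$.
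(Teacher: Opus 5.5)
Your proof is correct and is essentially the paper's argument. The paper argues by contradiction: a vertex whose component misses $R$ is not totally dominated by $R$, so the while loop would not have stopped. You state the same fact directly from the loop's termination condition.
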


\begin{proof}
If $x \in \bigcup_{i \in I(R)} U_i$ does not lie in a connected component of $G$ intersecting $R$, then in particular $x$ is not totally dominated by $R$. So we could have continued with another iteration of the while loop in Algorithm \ref{alg:growing-tuple}, which contradicts that $(R,S,T)$ is the output.
\end{proof}

\begin{claim} \label{claim-4}
Every vertex in $(S \cap T) - R$ lies in a component of $G$ that does not intersect $R$.
\end{claim}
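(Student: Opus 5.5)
The plan is to argue by contradiction. The key is an invariant of Algorithm \ref{alg:growing-tuple}: there are never edges between $(S \cap T) - R$ and $R$. This invariant, combined with the termination condition of the while loop and condition (b) of Theorem \ref{thm:equivalent-general}, will give the claim.

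\emph{Step 1 (invariant).} I will show by induction over the iterations that no edge of $G$ joins $(S\cap T)-R$ to $R$.
\begin{itemize}
\item Initially $R = S \triangle T$ and $(S\cap T) - R = S \cap T$. Any such edge would be an edge between $S\cap T$ and $S - T$ (or $T - S$). This is impossible, because $S$ and $T$ are independent sets.
\item In an iteration, the new set $S'\cap T'$ equals $Q_x$, and $R' = R \cup \{x\} \cup (Q_x \cap N(x))$. A vertex $q \in Q_x - R'$ has no neighbor in $R$, by requirement ii of step 2b.
\item Such a $q$ is not adjacent to $x$, because all neighbors of $x$ in $Q_x$ were placed into $R'$.
\item Such a $q$ is not adjacent to any vertex of $Q_x \cap N(x)$, since $Q_x$ is independent.
\end{itemize}
Hence the invariant is preserved, and it holds for the output $(R,S,T)$.

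\emph{Step 2 (setup).} Let $v \in (S\cap T) - R$, and suppose the component $K$ of $G$ containing $v$ meets $R$ in some vertex $r$. By condition (b), $K = K_{A,B}$ is complete bipartite; say $v \in A$. Since $r \in R$ and $v \notin R$, we have $r \neq v$.
\begin{itemize}
\item If $r \in B$, then $r$ is adjacent to $v$, which contradicts Step 1. So every vertex of $R \cap K$ lies in $A$; in particular $r \in A$.
\item The vertex $r$ lies in a block $U_i$ with $i \in I(R)$. The algorithm terminated, so $R$ totally dominates $\bigcup_{i\in I(R)} U_i$. Thus $r$ has a neighbor $r' \in R$.
\item The neighbor $r'$ lies in the same component $K$, on the side $B$. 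This contradicts the fact that $R \cap K \subseteq A$.
\end{itemize}
(If instead $r'\in B$ is allowed, then $r'$ is adjacent to $v$, again contradicting Step 1.)

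\emph{Degenerate case.} If $K$ is a single vertex, then $K \cap R = \{v\} \cap R = \emptyset$, so there is nothing to prove.

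The main point needing care is Step 1. One must check that the invariant survives each update, including the possibility that $x$ itself is adjacent to vertices of $Q_x$ that were not added to $R$. This case is exactly handled by adding all of $Q_x \cap N(x)$ to $R$. The remainder of the argument is a direct use of the completeness of the bipartite components.
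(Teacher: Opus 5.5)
Your proof is correct and follows the same route as the paper: complete bipartiteness forces $v$ to have a neighbor in $R$, and the way the algorithm builds $R$ and $S \cap T$ rules this out. Your invariant (via requirement ii of step 2b) and your use of the termination condition to get a vertex of $R$ on the side opposite $v$ make rigorous two steps the paper only asserts. These are that $v$ has a neighbor in $R$ at all, and that a neighbor $x \in R - (S \cup T)$ would force $v \in Q_x$; the latter is not automatic, since representatives of blocks outside $I(R)$ can change in later iterations.
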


\begin{proof}
Suppose that vertex $v \in (S \cap T) - R$ lies in a connected component of $G$ that intersects $R$. Since this component is a complete bipartite graph (by condition (b) of Theorem \ref{thm:equivalent-general}), $v$ is adjacent to a vertex in one of $S - T$, $T - S$, $R \cap (S \cap T)$, or $R - (S \cup T)$. We cannot have any of the first three cases because $S, T$ are IT's containing $v$, so $v$ is adjacent to a vertex $x \in R - (S \cup T)$. But then $x$ was chosen at some iteration of step 3a of the procedure, and $v$ lies in $Q_x$ from step 3b. Hence $v$ lies in the output $R$, a contradiction.
\end{proof}

\begin{claim} \label{claim-5}
The instance $(G[\bigcup_{i \in I(R)} U_i], \{U_i : i \in I(R)\})$ has a disconnected reconfiguration graph.
\end{claim}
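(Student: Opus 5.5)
Let $J \coloneqq I(R)$, $G_J \coloneqq G[\bigcup_{i\in J}U_i]$ and $\cU_J \coloneqq \{U_i : i \in J\}$. The idea is to show that $S\cap \bigcup_{i\in J}U_i$ and $T\cap\bigcup_{i\in J}U_i$ lie in different components of $\recongraph(G_J,\cU_J)$. We argue by contradiction. Suppose there is a reconfiguration sequence $P_0 = R\cap S, P_1,\ldots,P_N = R\cap T$ of IT's of $(G_J,\cU_J)$; note that $R\cap S$ and $R \cap T$ are IT's of this instance by property (ii). We want to lift it to a sequence in $\recongraph(G,\cU)$ from $S$ to $T$, which contradicts $S\in C_0$ and $T\in C_1$.

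The lift is to take $P_k \cup Q$ for every $k$, where $Q \coloneqq (S\cap T) - R$ is the common part of $S$ and $T$ outside the blocks indexed by $J$. Indeed $S = (R\cap S)\cup Q$ and $T=(R\cap T)\cup Q$: since $S\triangle T\subseteq R$, every vertex of $S$ outside $R$ lies in $S\cap T$. Moreover, $Q$ only uses blocks outside $J$, because $I(R\cap S)=I(R)$ and $S$ has one vertex per block.

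The key step is to check that each $P_k\cup Q$ is independent, and that consecutive sets $P_k \cup Q$, $P_{k+1}\cup Q$ have independent union. Both follow once we know there are no edges between $\bigcup_{i\in J}U_i$ and $Q$. By Claim \ref{claim-4}, every vertex $v\in Q$ lies in a component of $G$ disjoint from $R$. By Claim \ref{claim-3}, every vertex of $\bigcup_{i\in J}U_i$ lies in a component meeting $R$. An edge between the two sets would put both endpoints in the same component, which is a contradiction. Hence $P_k\cup Q$ is an IT of $(G,\cU)$ for every $k$, consecutive ones are adjacent in $\recongraph(G,\cU)$, and so $S$ and $T$ are connected, a contradiction. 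This is also where condition (b) enters, through Claim \ref{claim-4}.

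The main obstacle is bookkeeping rather than depth: we must confirm that the output $S,T$ of Algorithm \ref{alg:growing-tuple} still lie in $C_0,C_1$ respectively. This holds by requirement iii of step 2b together with property (i) of feasibility, which Claim \ref{claim-1} guarantees is maintained. We must also confirm that the blocks indexed by $J$ and those used by $Q$ partition $[m]$, which follows from $I(S\cap T)$ together with $I(R\cap S)=I(R)$. Nonemptiness of $\recongraph(G_J,\cU_J)$ is immediate, since $R\cap S$ is one of its vertices.
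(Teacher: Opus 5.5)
Your proof is correct and matches the paper's argument: you lift a hypothetical reconfiguration sequence from $R\cap S$ to $R\cap T$ by appending the common part $(S\cap T)-R$, which contradicts that $S$ and $T$ lie in distinct components of $\recongraph(G,\cU)$. You also combine Claims \ref{claim-3} and \ref{claim-4} to rule out edges between $(S\cap T)-R$ and all of $\bigcup_{i\in I(R)}U_i$, not just $R$. That is exactly what the lift needs, and it spells out a step the paper states only as independence from $R$.
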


\begin{proof}
Suppose for contradiction that the reconfiguration graph is connected. Then there is a reconfiguration sequence of IT's from $R \cap S$ to $R \cap T$, say $R \cap S = S_0', S_1', \ldots, S_{N-1}', S_N' = R \cap T$. Since every vertex in $S'' \coloneqq (S \cap T) - R$ is independent of $R$ by Claim \ref{claim-4}, this sequence can be lifted to a reconfiguration sequence of IT's of $(G, \cU)$ from $S$ to $T$, namely $S = S_0' \cup S'', S_1' \cup S'', \ldots, S_{N-1}' \cup S'', S_N' \cup S'' = T$. This contradicts that $S$ and $T$ lie in distinct connected components of $\recongraph(G, \cU)$.
\end{proof}

\begin{claim} \label{claim-6}
We have $I(R) = [m]$.
\end{claim}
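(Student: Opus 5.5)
We argue by contradiction using the block-criticality part of condition (a). Suppose $I(R) \neq [m]$, and pick any index $k \in [m] - I(R)$. By Claim \ref{claim-5}, the sub-instance
\[
(G_R, \cU_R) \coloneqq \Bigl(G\bigl[\textstyle\bigcup_{i \in I(R)} U_i\bigr], \{U_i : i \in I(R)\}\Bigr)
\]
already has a disconnected reconfiguration graph. We will lift this disconnectedness to the instance $(G - U_k, \cU - \{U_k\})$. That instance has a connected reconfiguration graph by condition (a), which gives the contradiction.

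The lifting step runs as follows. The two IT's $R \cap S$ and $R \cap T$ of $(G_R, \cU_R)$ lie in different components of $\recongraph(G_R, \cU_R)$. This is the content of the proof of Claim \ref{claim-5}: a reconfiguration sequence between them would lift, via Claim \ref{claim-4}, to one between $S$ and $T$.

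Let $S''' \coloneqq (S \cap T) - R - U_k$. Then $(R \cap S) \cup S'''$ and $(R \cap T) \cup S'''$ are IT's of $(G - U_k, \cU - \{U_k\})$; these are nonempty, so the nonemptiness clause is fine. By condition (a) they are joined by a reconfiguration sequence $Q_0, \ldots, Q_N$ in $\recongraph(G - U_k, \cU - \{U_k\})$. We project this sequence to $\bigcup_{i \in I(R)} U_i$ by setting $Q_j' \coloneqq Q_j \cap \bigcup_{i\in I(R)} U_i$. Each $Q_j'$ is an IT of $(G_R, \cU_R)$, and consecutive sets $Q_j', Q_{j+1}'$ are either equal or adjacent, because the union of two adjacent IT's restricted to a subset of blocks is still independent and of the correct size. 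After collapsing repeated sets, this yields a reconfiguration sequence from $R \cap S$ to $R \cap T$ in $\recongraph(G_R, \cU_R)$, a contradiction. Hence $I(R) = [m]$.

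The key point is that removing a block outside $I(R)$ cannot affect the sub-instance on $I(R)$, since restriction to a union of blocks preserves the adjacency relation. I expect the only subtlety to be making sure that $I(R) \neq \emptyset$, so that this argument is meaningful, and that $k \notin I(R)$ really exists. Both are immediate: $S \triangle T \neq \emptyset$ gives $I(R) \neq \emptyset$, and $k$ exists by the contradiction hypothesis. Conditions (b) and (c) are not needed for this claim beyond their use in Claims \ref{claim-4}--\ref{claim-5}.
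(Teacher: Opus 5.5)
Your proof is correct and is essentially the paper's argument: the paper simply says Claim~\ref{claim-6} follows directly from Claim~\ref{claim-5} and the block-criticality in condition (a). Your restriction of a reconfiguration sequence from $S - U_k$ to $T - U_k$ onto the blocks indexed by $I(R)$ is exactly the step that makes this ``directly'' rigorous, and it is worth spelling out as you did: connectivity of $\recongraph$ is not in general inherited when further blocks are deleted, so one needs the specific pair $R \cap S$, $R \cap T$, which extends to IT's of $(G - U_k, \cU - \{U_k\})$.
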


\begin{proof}
Since $(G, \cU)$ is minimally RGD (by condition (a) of Theorem \ref{thm:equivalent-general}), this follows directly from Claim \ref{claim-5}.
\end{proof}

\begin{claim} \label{claim-7}
The induced subgraph $G[R]$ is a matching.
\end{claim}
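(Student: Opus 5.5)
The plan is a double count of $|R|$, bounded above using the structure of a feasible 3-tuple and below using the fact that the algorithm stopped. Equality in the two bounds will force $G[R]$ to be a matching.

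\emph{Upper bound.} Write $k \coloneqq |I(S \triangle T)|$, so $|S - T| = |T - S| = k$. By Claim \ref{claim-6} and property (ii), $I(R \cap S) = I(R \cap T) = [m]$. Since $S,T$ are IT's, this gives $S \cup T \subseteq R$ and $|R \cap S \cap T| = m - k$. By property (iii), the remaining vertices of $R$ are the $c \coloneqq |R - (S \cup T)|$ star centers. Hence
\[ |R| = 2k + (m-k) + c = m + k + c. \]
Each of the $c$ stars in property (iii) is non-trivial, so it has at least one leaf. The leaves lie in $S \cap T$, so their number is at most $|R \cap S \cap T| = m-k$. Distinct stars have disjoint leaf sets, so $c \le m - k$. (Alternatively, count the $m-k$ edges of the forest $\mathcal{G}_{R-(S-T)}$, which has $m$ vertices and $k$ roots; each star contributes at least one of them.) Therefore $|R| \le 2m$.

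\emph{Lower bound.} Algorithm \ref{alg:growing-tuple} terminated, so $R$ totally dominates $\bigcup_{i \in I(R)} U_i$. By Claim \ref{claim-6} this union is $V(G)$. Take any component $K_{A,B}$ of $G$ (condition (b) of Theorem \ref{thm:equivalent-general}). A vertex of $A$ is adjacent only to vertices of $B$, so total domination of $A$ forces $R \cap B \neq \emptyset$. Symmetrically, $R \cap A \neq \emptyset$. Thus $R$ meets each of the $m$ components on both sides (condition (c)). Inside each component, $G[R]$ is the complete bipartite graph $K_{R \cap A,\, R \cap B}$ with both parts nonempty. In particular each component contains at least $2$ vertices of $R$, so $|R| \ge 2m$.

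\emph{Conclusion.} Combining the bounds gives $|R| = 2m$, and every component of $G$ contains exactly one vertex of $R$ on each side. Hence $G[R]$ is a disjoint union of $m$ copies of $K_{1,1}$, i.e.\ a perfect matching on $R$.

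The step I expect to need the most care is the upper bound. It must be checked that every vertex of $R - (S \cup T)$ really is a star center (this is property (iii)), so that $R$ contains no vertices beyond $S \cup T$ and the centers. It must also be checked that the leaves of different stars are distinct vertices of $R \cap S \cap T$. Both facts follow from the invariant maintained in Claim \ref{claim-1}. The lower bound is the crucial point where conditions (b) and (c) and the termination criterion of the algorithm are used.
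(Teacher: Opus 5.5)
Your proof is correct, but it counts a different quantity than the paper does.

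\textbf{The paper's route.} The paper counts connected components of $G[R]$. By Claims \ref{claim-3} and \ref{claim-6}, every component of $G$ meets $R$, so $G$ has at most as many components as $G[R]$. In the other direction, Claim \ref{claim-2} (no isolated vertices in $G[S\triangle T]$, which comes from the maximality of $|I(S\cap T)|$) bounds the number of components of $G[S\triangle T]$ by $|S-T|$. The stars of property (iii) number at most $|S\cap T|$. Condition (c) then forces every component of $G[S\triangle T]$ to be an edge and every star to have exactly one leaf.

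\textbf{Your route.} You count vertices of $R$ instead. Your upper bound $|R|=m+k+c\le 2m$ needs only the star structure. Your lower bound uses the full total-domination stopping condition together with condition (b) to put an $R$-vertex on each side of every component.

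\textbf{What each route buys.} Your argument never uses Claim \ref{claim-2}; in fact it recovers it. Its equality case also states outright that each component of $G$ contains exactly one edge of $G[R]$, with one endpoint in each part. That is the fact Claim \ref{claim-9} relies on, and the paper leaves it implicit in the equality of component counts. The paper's route, conversely, needs only the weaker consequence of termination that each component meets $R$. It does not use complete bipartiteness at this step.

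\textbf{A minor point.} Your ``symmetrically'' step tacitly needs $B\neq\emptyset$. This follows by first applying the argument to whichever side of the component is nonempty.
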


\begin{proof}
By Claims \ref{claim-3} and \ref{claim-6}, every connected component of $G$ contains a vertex from $R$. By construction, every connected component of $G[R]$ either lies in $G[S \triangle T]$, or is a non-trivial star that lies in $G[R - (S \triangle T)]$ with center in $R - (S \cup T)$ and at least one vertex in $S \cap T$. Since every component of $G[S \triangle T]$ has at least two vertices (by Claim \ref{claim-2}), $G[S \triangle T]$ has at most $|S \triangle T|/2 = |S - T|$ connected components. Thus,
\begin{equation*}
	\# \text{ components of } G \le \# \text{ components of } G[R] \le |S - T| + |S \cap T| = |S| = |\cU|.
\end{equation*}
By condition (c) of Theorem \ref{thm:equivalent-general}, the number of components of $G$ is equal to $|\cU|$. Thus, there must equality throughout the above inequality. In particular, $G[S \triangle T]$ has exactly $|S \triangle T|/2$ components each with at least two vertices, and so $G[S \triangle T]$ is a matching on $S \triangle T$. In addition, every component in $G[R - (S \triangle T)]$ consists of exactly one vertex from $S \cap T$ connected to a vertex in $R - (S \cup T)$, also yielding that $G[R - (S \triangle T)]$ is a matching. Therefore, $G[R]$ is a matching.
\end{proof}

Claims \ref{claim-6} and \ref{claim-7} finish the proof of Lemma \ref{lem:induced-matching}, so that any output $(R,S,T)$ of Algorithm \ref{alg:growing-tuple} is an \matchingconfig.

\subsection{Properties of induced matching configurations}

In this subsection, we record some additional properties of induced matching configurations (\matchingconfig's) $(R, S, T)$ on $(G, \cU)$ that will be useful for the proof of Theorem \ref{thm:equivalent-general} in the following subsection. Again we assume that $(G, \cU)$ satisfies conditions (a), (b), (c) of Theorem \ref{thm:equivalent-general}.

\begin{claim} \label{claim-8}
The block graph $\mathcal{G}_{S \triangle T}$ is a disjoint union of cycles.
\end{claim}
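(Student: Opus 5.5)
The plan is to combine two facts: every block vertex of $\mathcal{G}_{S\triangle T}$ has degree exactly $2$, and the graph has no loops. Together these give a $2$-regular multigraph, which is a disjoint union of cycles (where a pair of parallel edges counts as a $2$-cycle).

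First, the vertex set. By property (ii) of feasibility, $I(S\triangle T)=I(S-T)=I(T-S)$. For each $j\in I(S\triangle T)$, $U_j\cap(S\triangle T)$ consists of exactly two vertices, $s_j\in S-T$ and $t_j\in T-S$. By Claim \ref{claim-7}, $G[R]$ is a matching; in the proof of that claim we in fact showed that $G[S\triangle T]$ is a perfect matching on $S\triangle T$. So each of $s_j$ and $t_j$ has exactly one neighbor inside $S\triangle T$.

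Second, the edge types. Since $S$ and $T$ are independent sets, the neighbor of $s_j$ lies in $T-S$, say $t_k$, and the neighbor of $t_j$ lies in $S-T$. Thus every matching edge joins a vertex of $S-T$ to a vertex of $T-S$. After identifying each pair $\{s_j,t_j\}$ to the block vertex $U_j$, each $U_j$ has degree exactly $2$ in $\mathcal{G}_{S\triangle T}$, counting multiplicity.

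Third, the remaining case: ruling out loops. A loop at $U_j$ would mean $s_j t_j\in E(G)$. In that case the pair $\{s_j,t_j\}$ is a full component of the matching $G[S\triangle T]$, so neither vertex has any other neighbor in $S\triangle T$. I would then argue that $T':=T-\{t_j\}\cup\{s_j\}$ fails to be an IT, but this requires care. It is cleaner to note that the pair forms a component of $G$ meeting both $S$ and $T$ only in block $U_j$. Swapping within $U_j$ then contradicts the maximality of $|I(S\cap T)|$, in the spirit of Claim \ref{claim-2}. Concretely, $s_j$ has no neighbor in $T-\{t_j\}$: its only neighbor in $S\triangle T$ is $t_j$, and it has no neighbor in $S\cap T$ because $S$ is independent. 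Hence $T_0:=T-\{t_j\}\cup\{s_j\}$ is an IT. However, $T\cup T_0$ is not independent, since $s_j t_j$ is an edge, so $T_0$ need not be adjacent to $T$ in $\recongraph(G,\cU)$; this blocks the direct swap argument.

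I therefore expect ruling out loops to be the main obstacle, and I would try the following. Move to a third IT. Since $G$ is a disjoint union of complete bipartite graphs, the component containing the edge $s_jt_j$ is some $K_{A,B}$ with $s_j\in A$ and $t_j\in B$. By Claim \ref{claim-3}, all vertices of $U_j$ lie in components meeting $R$. I would look for a vertex $z\in U_j$ non-adjacent to both $s_j$ and $t_j$ and with no neighbors in $S\cap T$. Such a $z$ lets both $S$ and $T$ be moved into block $U_j$ in one step each, raising $|I(S\cap T)|$ and giving a contradiction.

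If no such $z$ exists, every vertex of $U_j$ is dominated by the $R$-matching. I would then use minimality (a): deleting the block $U_j$ gives a connected reconfiguration graph. Connect $S-\{s_j\}$ to $T-\{t_j\}$ in $\recongraph(G-U_j,\cU-\{U_j\})$, and try to lift this sequence with a fixed vertex of $U_j$ to reach a contradiction with $S\in C_0$ and $T\in C_1$. This lifting step is the delicate part, and I would analyze it using the matching structure of $R$ from Lemma \ref{lem:induced-matching}.
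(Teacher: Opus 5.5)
\textbf{There is a genuine gap in your third step, and it cannot be closed, because its conclusion is false.} Your first two steps are exactly the paper's argument: $G[S\triangle T]$ is a perfect matching whose edges join $S-T$ to $T-S$. So after identifying $s_j,t_j$ into $U_j$, every block vertex has degree $2$, and a $2$-regular multigraph is a disjoint union of cycles. The trouble is that you then treat loops as something to be excluded, and try to derive a contradiction from $s_jt_j\in E(G)$. Loops genuinely occur. Take the simplest bad instance, $G=K_{A,B}$ with $\cU=\{A\cup B\}$:
\begin{itemize}
\item It satisfies (a), (b) and (c), and $\recongraph(G,\cU)$ has the two components $\{\{a\}:a\in A\}$ and $\{\{b\}:b\in B\}$.
\item With $S=\{a\}$ and $T=\{b\}$, Algorithm 2 stops immediately with $R=\{a,b\}$, since $R$ already totally dominates $V(G)$.
\item $(R,S,T)$ is then an IMC whose block graph $\mathcal{G}_{S\triangle T}$ is a single vertex $U_1$ carrying a loop.
\end{itemize}
Both of your proposed routes fail on this example. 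There is no $z\in U_1$ nonadjacent to both $a$ and $b$. Deleting $U_1$ leaves the empty instance, so lifting a sequence from $\recongraph(G-U_1,\cU-\{U_1\})$ with a fixed vertex of $U_1$ yields nothing. Indeed, $S$ and $T$ really do lie in different components, so no contradiction is available.

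\textbf{The fix.} Read the statement in the multigraph sense the paper intends: a loop contributes $2$ to the degree and counts as a cycle of length one, just as a double edge counts as a cycle of length two. Then your second step already finishes the proof. Each $U_j$ has degree exactly $2$ when a loop is counted twice, and every component of a finite $2$-regular multigraph is a cycle of some length $\ge 1$. Your third paragraph should simply be deleted. As written, however, your proposal ends on an unresolved step whose intended conclusion (no loops) is false.
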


\begin{proof}
Since $(R,S,T)$ is an \matchingconfig, the induced subgraph $G[S \triangle T]$ is a matching. Thus, the block graph $\mathcal{G}_{S \triangle T}$ is $2$-regular, and the claim follows.
\end{proof}

\begin{claim} \label{claim-9}
Every vertex of $G$ is adjacent to exactly one vertex in $R$.
\end{claim}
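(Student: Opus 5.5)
Claim \ref{claim-9} follows from the structure of $G$ and what was shown for $R$ in Lemma \ref{lem:induced-matching}. By condition (b) of Theorem \ref{thm:equivalent-general}, every component of $G$ is a complete bipartite graph $K_{A,B}$. The proof of Claim \ref{claim-7} gives two facts. First, the chain of inequalities there is an equality, so the number of components of $G$ equals the number of components of $G[R]$. Second, since $G[R]$ is a matching, each component of $G[R]$ is a single edge.

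I will use these facts to show that each component of $G$ contains exactly one edge of $G[R]$. By Claims \ref{claim-3} and \ref{claim-6}, every component of $G$ meets $R$. Every vertex of $R$ is matched in $G[R]$, so every component of $G$ contains at least one edge of $G[R]$. Each edge of $G[R]$ lies in exactly one component of $G$. Since the two counts are equal, each component $K_{A,B}$ of $G$ contains exactly one edge of the matching $G[R]$, say $ab$ with $a \in A$, $b \in B$.

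Next I show $R \cap (A \cup B) = \{a,b\}$. Any other vertex of $R$ in this component would also be matched in $G[R]$. Its partner would be adjacent to it, hence in the same component, giving a second matching edge there, which is a contradiction.

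Now take any vertex $v$ of $G$, lying in a component $K_{A,B}$ with $R$-vertices $a \in A$ and $b \in B$. If $v \in A$, its neighbors are exactly $B$, so its only neighbor in $R$ is $b$. If $v \in B$, its only neighbor in $R$ is $a$. This holds whether or not $v \in R$; for $v = a$ it just recovers the matching edge. So every vertex has exactly one neighbor in $R$.

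The one delicate point is that the counting in Claim \ref{claim-7} compares components of $G$ with components of $G[R]$. The injection I need goes from components of $G$ to matching edges: each component of $G$ contains at least one edge, and distinct components contain disjoint sets of edges. Equality of the counts then forces exactly one edge per component. I would state this step explicitly rather than rely only on the chain of inequalities.
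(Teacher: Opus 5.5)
Your proof is correct. It differs from the paper's only in how you show that each component of $G$ contains exactly one edge of $G[R]$. You return to the equality case of the count in Claim~\ref{claim-7}, which says the number of components of $G$ equals the number of edges of the matching $G[R]$. The paper instead argues locally.

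Once every component $K_{A,B}$ meets $R$ (Claims~\ref{claim-3} and~\ref{claim-6}), each vertex of $R$ there has its matching partner in the same component, so $R$ meets both $A$ and $B$. Since $G[R \cap (A \cup B)]$ is the complete bipartite graph $K_{R \cap A,\, R \cap B}$, it is $1$-regular only if $|R \cap A| = |R \cap B| = 1$. Put differently, two matching edges $a_1b_1$ and $a_2b_2$ in the same component would force the extra edge $a_1b_2$ inside $G[R]$. So the ``delicate point'' you flag does not need the counting argument.

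The paper's route is shorter and uses only that $G[R]$ is an induced matching and that components are complete bipartite. Yours re-invokes condition (c) through the equality chain of Claim~\ref{claim-7}. That is valid, but it is redundant at this stage.
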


\begin{proof}
By Claims \ref{claim-2} and \ref{claim-5}, every vertex $v \in V(G)$ lies in a component that intersects $R$. Since $G[R]$ is a matching and every component of $G$ is a complete bipartite graph (by condition (b) of Theorem \ref{thm:equivalent-general}), it follows that $v$ is adjacent to exactly one vertex in $R$. 
\end{proof}

For the last two claims, and part of the proof in the next subsection, we use the following terminology. Let $(R,S,T)$ be an \matchingconfig{} on the vertex-partitioned graph $(G,\cU)$. Recall that the block graph $\mathcal{G}_{R - (S - T)}$ is a forest whose components are rooted at the blocks $U_r$ for $r \in I(S \triangle T)$. Then every other block $U_i$ is a descendant of one of these roots. For a block $U_j$ and vertex $w \in U_i \cap (R - (S \cap T))$, the \textit{$w$-child} of $U_j$ in $\mathcal{G}'$ is the block $U_k$ containing the unique neighbor of $w$ in $S \cap T$. A block $U_k$ is a \textit{$w$-descendant} of block $U_j$ if there is a sequence of blocks $U_j = V_0, V_1, \ldots, V_{N-1}, V_N = U_k$ together with a sequence of vertices $w = w_0, v_1, w_1, v_2, \ldots, v_{N-1}, w_{N-1}, v_N$ such that $w_i \in V_i \cap (R - (S \cup T))$, $v_i \in V_i \cap (S \cap T)$, and $v_i$ is the unique neighbor of $w_{i-1}$ in $R$, for all $i$. That is, each $V_i$ is the $w_{i-1}$-child of $V_{i-1}$. The sequence of blocks $V_0, V_1, \ldots, V_{N-1}, V_N$ is the unique path from $U_j$ to $U_k$ in the block graph $\mathcal{G}_{R - (S \triangle T)}$.

\begin{claim} \label{claim-10}
Let $s \in S - T$ and $t \in T - S$ be neighbors in $G[S \triangle T]$, lying in blocks $U_k$ and $U_\ell$ respectively. Then $N_G(s) \subseteq U_\ell$ and $N_G(t) \subseteq U_k$.
\end{claim}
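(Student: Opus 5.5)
The plan is to argue by contradiction, using the maximality built into the choice of $S,T$ together with Lemma \ref{lem:induced-matching}, which says any output of Algorithm \ref{alg:growing-tuple} on a maximizing pair is an \matchingconfig. By symmetry between $S$ and $T$ (swap the roles of $C_0$ and $C_1$) it suffices to prove $N_G(s) \subseteq U_\ell$.

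\emph{Setup.} Suppose $s$ has a neighbor $y \in U_j$ with $j \neq \ell$. By condition (b), the component of $G$ containing $s$ and $t$ is some $K_{A,B}$ with $s \in A$ and $t, y \in B$. By Claim \ref{claim-9}, $y$ has exactly one neighbor in $R$, and this neighbor is $s$. Since $S \cup T \subseteq R$, the vertex $y$ is adjacent to no vertex of $T$, and to no vertex of $S$ other than $s$.

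\emph{Case 1: $j \in I(S \triangle T)$.} Let $t_j$ be the vertex of $T$ in $U_j$. Then $t_j \in T-S$, since block $U_j$ lies in $S \triangle T$. Set $T' = (T - \{t_j\}) \cup \{y\}$.
\begin{itemize}
\item $T'$ is an IT, and $T \cup T'$ is independent of size $|\cU|+1$, because $y$ has no neighbor in $T$. Hence $T'$ is adjacent to $T$ in $\recongraph(G,\cU)$, so $T' \in C_1$.
\item $y \notin S$, since $y$ is adjacent to $s \in S$. Hence $I(S \cap T') = I(S \cap T)$, so $|I(S \cap T')|$ is still maximum and $S,T'$ is a legitimate input to Algorithm \ref{alg:growing-tuple}.
\item By Lemma \ref{lem:induced-matching} applied to $S,T'$, the graph $G[S \triangle T']$ must be a matching.
\item However, $t \in T' - S$ because $j \neq \ell$, and $y \in T'-S$. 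So $s$ is adjacent to both $t$ and $y$ in $G[S \triangle T']$, which is a contradiction.
\end{itemize}
The subcase $j = k$ is included here. Note only $t_j$ is removed, and $t_j \neq s$ because $t_j \in T$ while $s \notin T$.

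\emph{Case 2: $j \in I(S \cap T)$.} This is the main obstacle. Let $v$ be the common vertex of $S$ and $T$ in $U_j$. Simply replacing $v$ by $y$ in $T$ lowers $|I(S\cap T)|$, so the maximality is lost. I would use the forest structure of property (iv).
\begin{itemize}
\item Since $U_j$ is not a root, $v$ is matched in $G[R]$ to some $w \in R-(S\cup T)$ lying in the parent block $U_i$.
\item I would then walk from $U_j$ up the unique path to its root $U_r$, with $r \in I(S \triangle T)$.
\item At each step, move $w$ into $T$ (and, where needed, into $S$) in place of the current representative of its block. Then put $y$ into $U_j$. This produces IT's $S^*, T^*$ in $C_0, C_1$ respectively.
\item Each individual swap should be a legal reconfiguration step. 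The reason is that every vertex of $R - (S \cup T)$ has a unique $R$-neighbor, and that neighbor is exactly the child vertex being vacated, by Claim \ref{claim-9} and the matching structure.
\item The hope is that after this propagation, $|I(S^* \cap T^*)| \ge |I(S\cap T)|$. Moreover, $s$ should end up adjacent to two vertices of $T^*-S^*$, namely $t$ and $y$, or else one of $S^*,T^*$ witnesses an isolated vertex in $G[S^*\triangle T^*]$.
\item Either outcome contradicts Claim \ref{claim-2} or Lemma \ref{lem:induced-matching} applied to the pair $S^*, T^*$, exactly as in Case 1.
\end{itemize}
The delicate point is checking that the swaps can be ordered, from the leaf end toward the root or the reverse, so that every intermediate set is an IT and each consecutive pair is adjacent in $\recongraph(G,\cU)$. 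I would order them so that a vertex is always vacated before its unique $R$-neighbor is inserted.
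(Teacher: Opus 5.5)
Your Case 1 is correct, and it takes a different and cleaner route than the paper. Algorithm~\ref{alg:growing-tuple} never changes $S-T$ or $T-S$, so Lemma~\ref{lem:induced-matching} applied to the still-maximizing pair $(S,T')$ forces $G[S\triangle T']$ to be a matching. This fails because $s$ is adjacent to both $t$ and $y$. The gap is Case 2, and it cannot be filled, because the statement is false there. Let $G$ have vertices $s,t,y,w,v$ and edges $st, sy, wv$, so $G=K_{\{s\},\{t,y\}}\sqcup K_{\{w\},\{v\}}$. Let $\cU=\{U_1,U_2\}$ with $U_1=\{s,t,w\}$ and $U_2=\{v,y\}$. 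The IT's are $\{s,v\},\{t,v\},\{t,y\},\{w,y\}$. In $\recongraph(G,\cU)$ the first is isolated and the other three form a path. Deleting $U_1$ leaves two non-adjacent vertices, and deleting $U_2$ leaves $s,t,w$ with the single edge $st$; both give connected reconfiguration graphs. So (a), (b), (c) hold. Take $C_0=\{\{s,v\}\}$; the maximizing pair is then $S=\{s,v\}$, $T=\{t,v\}$. Algorithm~\ref{alg:growing-tuple} picks $x=w$ with $Q_w=\{v\}$ (the choice $\{y\}$ is excluded since $y$ is adjacent to $s\in R$). This outputs the IMC $R=\{s,t,w,v\}$ with $G[R]$ consisting of the edges $st,wv$. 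Here $k=\ell=1$, yet $y\in N_G(s)\cap U_2$ with $2\in I(S\cap T)$. This is precisely your Case 2. For a variant with $k\neq\ell$, take $U_1=\{s_1,t_1,w\}$, $U_2=\{s_2,t_2\}$, $U_3=\{v,y\}$, edges $s_1t_2,s_1y,s_2t_1,wv$, $S=\{s_1,s_2,v\}$ and $T=\{t_1,t_2,v\}$.

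Your propagation sketch fails for a concrete reason. The vertex $y$ can replace $v$ in $T$ but never in $S$, since it is adjacent to $s\in S$. So only $T$ moves up the path $U_j\to U_r$, and the new pair disagrees on $V_1,\dots,V_N$. Maximality is therefore lost, and you need it to invoke Lemma~\ref{lem:induced-matching} or Claim~\ref{claim-2}.

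The paper's own argument, applied symmetrically to $s$, has the same hole. It pushes the offending neighbor up to the root $U_r$ and rotates the cycle of $\mathcal{G}_{S\triangle T}$ through $U_r$. When that cycle passes through the block of $s$, the rotation must insert $s$ while $y$ is still present. In the example this cycle is the loop at $U_r=U_1$.

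What survives is exactly your Case 1: every neighbor of $s$ lying in a block indexed by $I(S\triangle T)$ lies in $U_\ell$, and symmetrically for $t$. Later uses of Claim~\ref{claim-10} would have to be rechecked against this weaker statement.
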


\begin{proof}
We show that $N_G(t) \subseteq U_k$, while a symmetric argument would show that $N_G(s) \subseteq U_\ell$. We show that if $N_G(t) \not\subseteq U_k$, then we may construct another IT $S''$ in the same component of $\recongraph(G,\cU)$ as $S$ and satisfying $|I(S'' \cap T)| > |I(S \cap T)|$, which would contradict the maximality of $|I(S \cap T)|$ in the definition of feasible 3-tuples. 

See Figure \ref{fig:switching-1}(a) for an illustration of the argument. Supposing that $N_G(t) \not\subseteq U_k$, there exists a vertex $x \in N_G(t) - U_k$. We have $x \notin R$ because $G[R]$ is a matching, and $t$ is the unique neighbor of $x$ in $R$ by Claim \ref{claim-9}. Say that $x$ lies in block $U_{j} \in \cU$, and that $U_{j}$ is a descendent of root $U_r$ in the block graph $\mathcal{G}_{R - (S - T)}$. Let $U_r = V_0, V_1, \ldots, V_{N-1}, V_N = U_{j}$ be the unique path from $U_r$ to $U_{j}$ in $\mathcal{G}_{R - (S - T)}$, with associated sequence of vertices $w_0, v_1, w_1, v_2, \ldots, v_{N-1}, w_{N-1}, v_N$ such that $w_i \in V_i \cap (R - (S \cup T))$, $v_i \in V_i \cap (S \cap T)$, and $v_i$ is the unique neighbor of $w_{i-1}$ in $R$, for all $i$. First we construct the sequence of IT's $S, S_0, S_1, \ldots, S_{N-1}, S_N$ where
\begin{equation*}
	S_0 \coloneqq S \cup \{x\} - \{v_N\}, \quad \quad S_i \coloneqq S_{i-1} \cup \{w_{N-i}\} - \{v_{N-i}\} \text{ for all } 1 \le i \le N, 
\end{equation*} 
with $v_0 = s_0$ denoting the vertex in $U_r \cap S$.

Since the block graph $\mathcal{G}_{S \triangle T}$ is a disjoint union of cycles by Claim \ref{claim-8}, the connected component of $\mathcal{G}_{S \triangle T}$ containing $U_r$ is the cycle $U_r = V_0', V_1', \ldots, V_{M-1}', V_M' = U_r$, with associated sequence of vertices $s_0, t_1, s_1, t_2, \ldots, t_{M-1}, s_{M-1}, t_0, s_0$ where $s_i \in V_i' \cap S$ and $t_i \in V_i' \cap T$ for all $i$. Then we may construct the sequence of IT's $S_N = S_0', S_1', \ldots, S_{M-1}', S_M'$ where
\begin{equation*}
	S_i' \coloneqq S_{i - 1}' \cup \{t_i\} - \{s_i\} \text{ for all } 1 \le i \le M-1, \quad \quad S_M' \coloneqq S_{M-1} \cup \{t_0\} - \{w_0\}.
\end{equation*}
We can do this reconfiguration regardless of whether or not the blocks $U_k, U_\ell$ containing $s,t$ are among the $V_i'$ (this is the case in Figure \ref{fig:switching-1}(a)).

Finally, we do a reversal of the first reconfiguration sequence to get the sequence of IT's $S_M' = S_0'', S_1'', \ldots, S_{N-1}'', S_N''$ where
\begin{equation*}
	S_i'' \coloneqq S_{i - 1}'' \cup \{v_i\} - \{w_i\} \text{ for all } 1 \le i \le N-1, \quad \quad S_N'' = S_{N-1} \cup \{v_N\} - \{x\}.
\end{equation*}
Then the IT $S'' \coloneqq S_N''$ still lies in the same component of $\recongraph(G, \cU)$ as $S$, and now it agrees with $T$ on the additional blocks $V_0', V_1', \ldots, V_{M-1}'$. This gives the required construction.
\end{proof}

\begin{figure}[h]
	\begin{center}
		\includegraphics[scale=0.75]{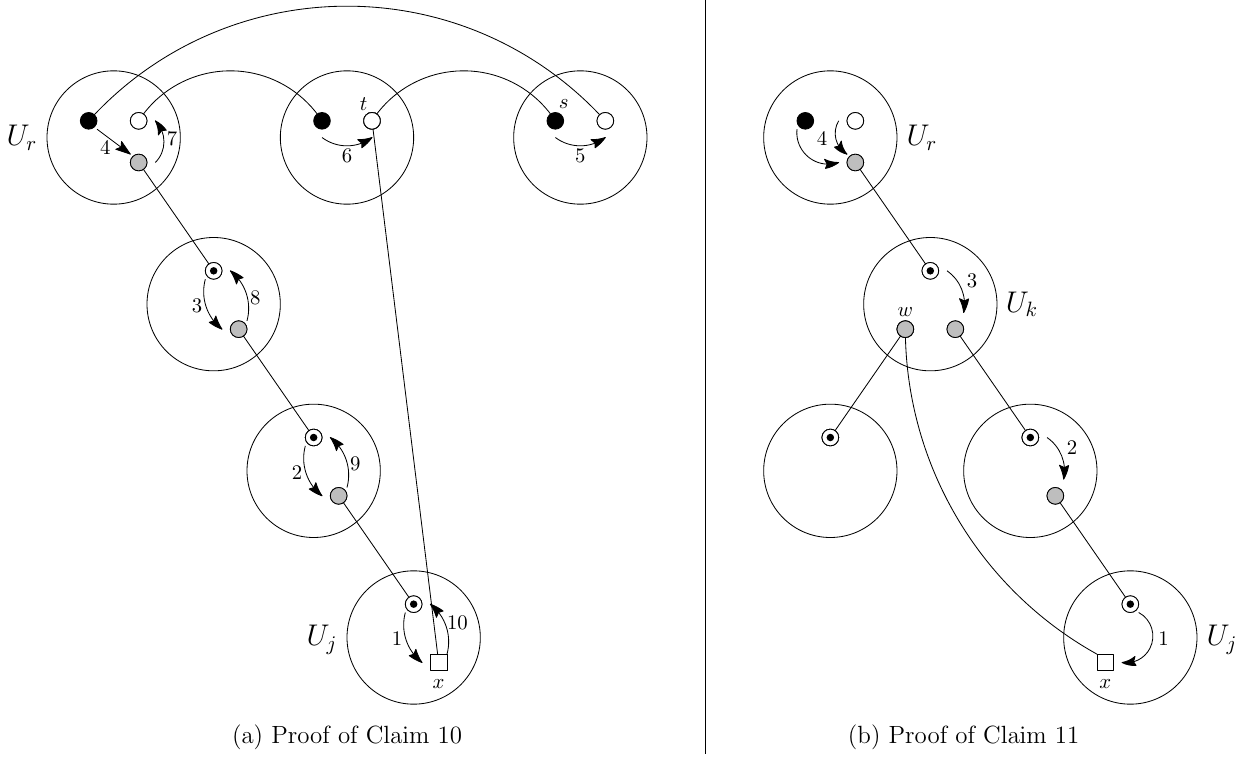}
	\end{center}
	\caption{(a) Reconfiguration in proof of Claim \ref{claim-10}. The arrows with numbers represent the movement of the black vertex within a block at a given step. (b) Reconfigurations in proof of Claim \ref{claim-11}. The arrows with numbers represent the movement of both the black and white vertex within a block at a given step.}
	\label{fig:switching-1}
\end{figure}

\begin{claim} \label{claim-11}
For every vertex $w \in R - (S \cup T)$, lying in block $U_k \in \cU$, every neighbor of $w$ in $G$ lies in a block that is a $w$-descendant of $U_k$ in the block graph $\mathcal{G}_{R - (S - T)}$.
\end{claim}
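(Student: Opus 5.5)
We argue by contradiction, in the same way as Claim \ref{claim-10}. Suppose $w \in R - (S \cup T)$ lies in $U_k$ and has a neighbor $x$ lying in a block $U_j$ that is not a $w$-descendant of $U_k$. Let $v$ be the unique neighbor of $w$ in $R$; by construction of Algorithm \ref{alg:growing-tuple}, $v \in S \cap T$. By Claim \ref{claim-9}, $w$ is the unique neighbor of $x$ in $R$. Since $G[R]$ is a matching, $x \notin R$ unless $x = v$, and $x=v$ is excluded because the block of $v$ is the $w$-child of $U_k$. Our aim is to use $x$ as a ``free'' vertex that lets us move $S$ within its component of $\recongraph(G,\cU)$ to an IT $S''$ with $|I(S''\cap T)|>|I(S\cap T)|$. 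This would contradict property (i) of feasible 3-tuples.

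The construction has three phases. First, take the path $U_r = V_0, \dots, V_N = U_j$ from the root of $U_j$ in $\mathcal{G}_{R-(S-T)}$, with vertices $w_0,v_1,\dots,w_{N-1},v_N$. Swap $x$ in for $v_N$, then $w_{N-1}$ for $v_{N-1}$, and so on, until $w_0$ replaces $s_0 \in U_r \cap S$. We must check that each intermediate set is independent. At the moment $w_i$ enters, its unique $R$-neighbor $v_{i+1}$ has already left, and by Claim \ref{claim-9} $w_i$ has no other neighbors in $R$. The vertex $x$ is adjacent only to $w$ among vertices of $R$. So the first phase is valid provided $w \notin \{w_0,\dots,w_{N-1}\}$. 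That is exactly the non-descendant hypothesis: if $w = w_i$ for some $i$, then $U_j$ would lie below the $w$-child of $U_k$ and hence be a $w$-descendant of $U_k$. Second, rotate around the cycle of $\mathcal{G}_{S\triangle T}$ through $U_r$ (Claim \ref{claim-8}), replacing each $s_i$ by $t_i$ and finally $w_0$ by $t_0$, exactly as in Claim \ref{claim-10}. Each $t_i$ has a unique $R$-neighbor $s_{i+1}$ or similar, which has already been removed. Third, reverse the first phase.

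The main obstacle is the case where $U_k$ or the blocks on the path to $U_k$ interact with the phases. In particular, $w$ itself is never in the current IT, since it is not in $S$, so $x$ coexisting with the current IT is only threatened if $w$ gets swapped in. This is why we need $w$ off the path from $U_r$ to $U_j$. A second case is when $U_j$ lies in a different rooted tree from $U_k$, or $U_k$ is itself on the path. Neither causes trouble, because only $w$ is forbidden and it is never used. After the reversal, the IT $S''$ agrees with $S$ except on the cycle blocks, where it now agrees with $T$. Hence $|I(S''\cap T)| > |I(S\cap T)|$, which gives the contradiction. The remaining work is routine checking, as in Claim \ref{claim-10}, that $x$ is independent from all the $t_i$ and $w_i$ used; this follows from Claim \ref{claim-9}, since $x$'s only neighbor in $R$ is $w$.
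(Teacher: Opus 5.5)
Your argument is correct, but it takes a longer route than the paper's. The paper uses the fact that $x$ is free with respect to \emph{both} IT's: its unique neighbor in $R$ is $w$, which lies in neither $S$ nor $T$. So the paper runs only your first phase, applied to $S$ and to $T$ simultaneously. In each, $x$ enters $U_j$ and $w_{N-1},\dots,w_0$ are pushed up the path to the root $U_r$. The resulting $S_N\in C_0$ and $T_N\in C_1$ agree on every path block, as before, and now also on $U_r$, where both contain $w_0$. Hence $|I(S_N\cap T_N)|=|I(S\cap T)|+1$, with no need for Claim \ref{claim-8}, the cycle rotation, or the reversal.

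Your version instead keeps $T$ fixed and moves only inside $C_0$, following the template of Claim \ref{claim-10}. That template is what one needs when $x$ can be inserted into $S$ but not into $T$. It gains a whole cycle of blocks rather than one, but here that is unnecessary extra work.

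What actually makes your rotation legitimate is your explicit check that $x$ is non-adjacent to every $t_i$, which holds because $w\notin T$. Your proof should rest on that check rather than on ``exactly as in Claim \ref{claim-10}''. In Claim \ref{claim-10}'s own setting, $x$'s $R$-neighbor lies in $T$, and the rotation can fail when that neighbor sits on the rotated cycle.

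A cosmetic point: when $U_j$ is itself a root ($N=0$), the last rotation step replaces $x$, not $w_0$, by $t_0$, and the third phase is empty.
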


\begin{proof}
See Figure \ref{fig:switching-1}(b) for an illustration of this proof. Suppose for contradiction that $w$ has a neighbor $x$ in a block $U_j$ that is not a $w$-descendant of $U_k$ in $\mathcal{G}_{R - (S - T)}$. By Claim \ref{claim-9}, $w$ is the unique neighbor of $x$ in $R$. Say that block $U_j$ is a descendant of root $U_{r}$ in $\mathcal{G}_{R - (S - T)}$, where $r \in I(S \triangle T)$. Let $U_{r} = V_0, V_1, \ldots, V_{N-1}, V_N = U_j$ be the unique path from $U_{r}$ to $U_j$ in $\mathcal{G}_{R - (S - T)}$, and let $w_0, v_1, w_1, v_2, \ldots, v_{N-1}, w_{N-1}, v_N$ be the associated sequence of vertices of $R$ where $w_i \in V_i \cap (R - (S \cup T))$, $v_i \in V_i \cap (S \cap T)$, and $v_i$ is the unique neighbor of $w_{i-1}$ in $R$. None of the vertices in the sequence are $w$ (although one of the blocks could contain $w$, which is the case in Figure \ref{fig:switching-1}(b)), so none of these vertices are adjacent to $x$. Then we are able to produce the following reconfiguration sequences of IT's of $(G, \cU)$: $S, S_0, S_1, \ldots, S_{N-1}, S_N$ and $T, T_0, T_1, \ldots, T_{N-1}, T_N$, where
\begin{equation*}
	S_0 \coloneqq S \cup \{x\} - \{v_N\}, \quad S_{i} \coloneqq S_{i-1} \cup \{w_{N-i}\} - \{v_{N-i}\} \text{ for all } 1 \le i \le N,
\end{equation*}
and $v_0 = s$ is the vertex in $U_{\ell} \cap S$ (and analogous definitions for the $T_i$'s). Since $S_N, T_N$ still lie in the same connected component of $\recongraph(G,\cU)$ as $S, T$, and $|I(S_N \cap T_N)| > |I(S \cap T)|$, this contradicts the maximality of $|I(S \cap T)|$ in the definition of feasible tuples.
\end{proof}

\subsection{Finishing the proof of Theorem \ref{thm:equivalent-general}}

In this subsection, we finish the proof of Theorem \ref{thm:equivalent-general}. Let $(G,\cU)$ satisfy conditions (a), (b), (c) of the theorem statement. We prove that conclusion of the theorem holds for the block $U_m \in \cU$, without loss of generality. It is sufficient to find a vertex $u \in V(G)$ with neighborhood $N_G(u) \subseteq U_m$. This is because, by condition (b), every component of $G$ is a complete bipartite graph, so $N_G(u)$ is a part $A$ of some complete bipartite component $K_{A,B}$ of $G$, and we get that $A \subseteq U_m$ as required. 

We start by fixing two distinct connected components $C_0, C_1$ of $\recongraph(G,\cU)$, and then choosing two IT's $S' \in C_0$, $T' \in C_1$ such that $|I(S' \cap T')|$ is maximized. Then we let $(R, S, T)$ be the \matchingconfig{} that is output from Algorithm \ref{alg:growing-tuple} when applied to $S', T'$. First assume that $m \in I(S \triangle T)$, that is, $S$ and $T$ disagree at the block $U_m$. Let $s \in U_m \cap S$ be the representative of $U_m$ in $S$, and let $t \in T - S$ be the unique neighbor of $s$ in $R$ (by Claim \ref{claim-9}). Then by Claim \ref{claim-10}, we already have $N_G(t) \subseteq U_m$, so $u \coloneqq t$ is a desired vertex.

Now assume instead that $m \in I(S \cap T)$, that is, $S$ and $T$ agree at the block $U_m$. We prove that a desired vertex exists by induction on the degree of the block $U_m$ in the block forest $\mathcal{G}_{R - (S - T)}$. To start, suppose that the degree of $U_m$ is one, that is, it is a leaf. Say that $U_m$ is the $w$-descendant of block $U_j$ in $\mathcal{G}_{R - (S - T)}$, for some $1 \le j \le m-1$ and $w \in (R - (S \cup T)) \cap U_j$. By Claim \ref{claim-11}, every neighbor of $w$ in $G$ lies in a block that is a $w$-descendant of $U_j$. But $U_m$ is the only $w$-descendant of $U_j$ (as it is a leaf), so $N_G(w) \subseteq U_m$ and $u \coloneqq w$ is a desired vertex.

Assuming that a desired vertex exists when the degree is $d-1$, for some $d \ge 2$, suppose now that $U_m$ has degree $d$ in $\mathcal{G}_{R - (S - T)}$. Let $v \in U_m \cap (S \cap T)$, and let $w \in U_j$ be the unique neighbor of $v$ in $R$, so that $U_m$ is the $w$-child of $U_j$. If $N_G(w) \subseteq U_m$, then $u \coloneqq w$ is our desired vertex. Otherwise, there exists some vertex $x \in N_G(w) - U_m$, lying in block say $U_k$. By Claim \ref{claim-10}, $U_k$ is a $w$-descendant of $U_j$, and hence a descendant of $U_m$. Let $U_j = V_0, V_1, V_2 \ldots, V_{N-1}, V_N = U_k$ be the unique path from $U_j$ to $U_k$ in $\mathcal{G}_{R - (S - T)}$. Let $w_0, v_1, w_1, v_2, \ldots, v_{N-1}, w_{N-1}, v_N$ be the associated sequence of vertices of $R$ where $w_i \in V_i \cap (R - (S \cup T))$, $v_i \in V_i \cap (S \cap T)$, and $v_i$ is the unique neighbor of $w_{i-1}$ in $R$. Note that $V_1 = U_m$, $w_0 = w$, and $v_1 = v$. We construct the reconfiguration sequences of IT's $S = S_0, S_1, \ldots, S_{N-1}, S_{N}$ and $T=T_0, T_1, \ldots, T_{N-1}, T_{N}$ where
\begin{align*}
	S_1 \coloneqq S \cup \{x\} - \{v_N\}, \quad \quad S_i \coloneqq S_{i-1} \cup \{w_{N-i+1}\} - \{v_{N-i+1}\} \text{ for all } 2 \le i \le N,
\end{align*}
and analogous definitions for the $T_i$'s. Also defining $R_N \coloneqq R \cup \{x\} - \{v\}$, we claim that $(R_{N}, S_{N}, T_{N})$ is another \matchingconfig. 

Indeed, we still have $I(R_N) = [m]$, and $G[R_N]$ is still a matching after replacing the edge $\{w,v\}$ with the edge $\{w,x\}$. Properties (i), (ii), and (iii) of feasible tuples are easily seen to still be satisfied. Finally, to verify property (iv), notice that the block graph $\mathcal{G}_{R_N - (S_{N} - T_{N})}$ is obtained from the block graph $\mathcal{G}_{R - (S - T)}$ by adding the edge $\{U_j, U_k\}$ while deleting the edge $\{U_j, U_m\}$. The edge $\{U_j, U_m\}$ was the first edge in the unique path from $U_j$ to $U_k$ in the forest $\mathcal{G}_{R - (S - T)}$, so hence $\mathcal{G}_{R_N - (S_{N} - T_{N})}$ is still a forest. 

Now $U_m$ has degree one less in $\mathcal{G}_{R_N - (S_{N} - T_{N})}$ than it did in $\mathcal{G}_{R - (S - T)}$, since it lost the incident edge $\{U_j, U_m\}$ while all other incident edges remain the same. Therefore, a desired vertex $u \in V(G)$, with $N_G(u) \subseteq U_m$, exists by induction. Our proof of Theorem \ref{thm:equivalent-general} is finished.

\section{Concluding remarks} \label{sec:discussion}

In this paper, we solved Problem \ref{prob:main-problem} of Buys, Kang, and Ozeki \cite{buys2025reconfiguration}, by giving an exact characterization of instances $(G,\cU)$ that are extremal for their Theorem \ref{thm:BKO}. Our solution was analogous to former combinatorial work of Haxell and the author \cite{haxell2024constructing} on graphs with no independent transversals, although we required a modified technical setup.

In \cite{haxell2024constructing}, the analogous construction lemma \cite[Lemma 3]{haxell2024constructing} was applied to many different settings beyond disjoint unions of complete bipartite graphs. For example, working in a list coloring setting based on color degree, the authors systematically generalized counterexamples due to Bohman and Holzman \cite{bohman2002list} to a former conjecture of Reed \cite{reed1999list}. See \cite{cambie2024precise, haxell2024degree, wdowinski2026bounded} for other interesting applications of their lemma. The construction lemma in this paper (Lemma \ref{lem:construction-intro}) applies equally well to all those settings, just now with a different starting point and the results being disconnected reconfiguration graphs (as opposed to empty ones). On the other hand, in settings such as list coloring which involve rather large numbers of blocks, Lemma \ref{lem:construction-intro} does not appear to yield much of an improvement over the previous constructions or even more elementary constructions. It would be interesting to find if one can do better than Lemma \ref{lem:construction-intro} in these kinds of settings.

Analogous to \cite[Lemma 24]{haxell2024constructing}, Lemma \ref{lem:construction-intro} can be generalized to the setting of simplicial complexes. In this generalized setting, independent transversals in graphs are replaced by colorful simplices in simplicial complexes. The reconfiguration graph on colorful simplices is defined analogously to independent transversals (see \cite{wdowinski2025hall}), and the union of disjoint graphs used in Lemma \ref{lem:construction-intro} is replaced by the \textit{join} of two simplicial complexes (see \cite{haxell2024constructing}). Such a generalized lemma could be useful in other situations where one seeks disconnected reconfiguration graphs not necessarily phrased in terms of independent transversals. The main challenge would be to find when the construction lemma precisely characterizes the extremal examples, which was the situation in this paper but is definitely not always the case. From a simplicial complex perspective, the main result presented in this paper is a characterization of vertex partitions on certain triangulated spheres whose reconfiguration graphs are disconnected.

Finally, in view of the topological connections made in \cite{wdowinski2025hall}, it is natural to give a common generalization of \cite[Lemma 3]{haxell2024constructing} and our Lemma \ref{lem:construction-intro} in terms of higher dimensional topological connectedness of the \textit{colorful complex} $\colorfulcomplex(\mathcal{I}(G),\cU)$ which was described in \cite{wdowinski2025hall}. Such a generalized construction lemma does indeed hold, although we do not make it explicit here. On the other hand, it is not immediate to generalize \cite[Lemma 28]{haxell2024constructing} and Theorem \ref{thm:equivalent-general} to this higher dimensional setting. This is due to not yet having the appropriate combinatorial tools to prove them. Such generalizations would yield an analogous characterization of vertex-partitioned disjoint unions of complete bipartite graphs whose colorful complex $\colorfulcomplex(\mathcal{I}(G),\cU)$ is not homologically $k$-connected, where $k \ge -1$ is fixed. The following conjecture is what we would like to show (see \cite{wdowinski2025hall} for context and notation, reduced homology $\widetilde{H}_j(\cdot)$ being over any fixed ring).

\begin{conj} \label{conj:general-topology}
Let $G$ be a graph, let $\cU = \{U_1, \ldots, U_m\}$ be a partition of $V(G)$, and let $k \ge -1$. Suppose that
\begin{itemize}
	\item[(a)] $\widetilde{H}_k(\colorfulcomplex(\mathcal{I}(G),\cU)) \neq 0$, but $\widetilde{H}_j(\colorfulcomplex(\mathcal{I}(G - U_i),\cU - \{U_i\})) = 0$ for all $U_i \in \cU$ and $-1 \le j \le k$;
	\item[(b)] $G$ is a disjoint union of complete bipartite graphs; and
	\item[(c)] $G$ has exactly $|\cU|+k$ connected components.
\end{itemize}
Then for every $U_i \in \cU$, there exists a complete bipartite component $K_{A,B}$ of $G$ such that $A \subseteq U_i$.
\end{conj}

\section*{Acknowledgments}
This research was supported by the Austrian Science Fund (FWF) [10.55776/F1002]. For open access purposes, the author has applied a CC BY public copyright license to any author accepted manuscript version arising from this submission.

\printbibliography

\newpage

\appendix

\section{Proof of Lemmas \ref{lem:refined-construction} and \ref{lem:minimal-non-reconfigurable}} \label{sec:appendix}

In this appendix, we provide proofs of Lemmas \ref{lem:refined-construction} and \ref{lem:minimal-non-reconfigurable}. 

\begin{proof}[Proof of Lemma \ref{lem:refined-construction}]
We prove the following four separate claims, assuming throughout that $(H, \cV)$ is minimally NIT:
\begin{itemize}
	\item[(i)] If $\recongraph(G, \cU)$ is disconnected, then $\recongraph(G \cup H, \cW)$ is disconnected.
	\item[(ii)] If $\recongraph(G \cup H, \cW)$ is disconnected, then $\recongraph(G, \cU)$ is disconnected.
	\item[(iii)] If $\recongraph(G - U, \cU - \{U\})$ is connected for all $U \in \cU$, then $\recongraph((G \cup H) - W, \cW - \{W\})$ is connected for all $W \in \cW$.
	\item[(iv)] If $\recongraph((G \cup H) - W, \cW - \{W\})$ is connected for all $W \in \cW$, then $\recongraph(G - U, \cU - \{U\})$ is connected for all $U \in \cU$.
\end{itemize}
Claim (i) follows from Lemma \ref{lem:construction-intro}, which was already proven at the beginning of Section \ref{sec:construction-lemma}.

For claim (ii), we prove the contrapositive: Assuming that $\recongraph(G, \cU)$ is connected, we wish to show that $\recongraph(G \cup H, \cW)$ is connected. Consider any two IT's $S, T$ of $(G \cup H, \cW)$. Since $(H, \cW)$ is minimally NIT, the sets $S \cap V(G)$ and $T \cap V(G)$ each contain an IT of $(G, \cU)$ (they could contain multiple vertices from $U_m$), say $S_0$ and $T_0$ respectively. Since $\recongraph(G, \cU)$ is connected, there is a reconfiguration sequence of IT's of $(G, \cU)$ from $S_0$ to $T_0$, say $S_0, S_1, S_2, \ldots, S_{N-1}, S_N = T_0$. Since $(H, \cV)$ is minimally NIT, each of these IT's of $(G, \cU)$ can be extended to IT's of $(G \cup H, \cW)$, say $S_0 \cup S_0', S_1 \cup S_1', \ldots, S_{N-1} \cup S_{N-1}', S_N \cup S_N'$. Assume that $S_i' = S_j'$ whenever $S_i \cap U_m$ and $S_j \cap U_m$ lie in the same block $V_j' \in \cW$. Now, for each $0 \le i \le N-1$, we claim that we can find a reconfiguration sequence of IT's of $(G \cup H, \cW)$ from $S_i \cup S_i'$ to $S_{i+1} \cup S_{i+1}'$. If $S_i \cap U_m$ and $S_{i+1} \cap U_m$ lie in the same block $V_j' \in \cW$, then we may reconfigure $S_i \cup S_i'$ to $S_{i+1} \cup S_{i+1}' = S_{i+1} \cup S_i'$ directly. Otherwise, if $S_i \cap U_m$ lies in $V_{j_0}' \in \cV$ and $S_{i+1} \cap U_m$ lies in $V_{j_1} \in \cV$ with $j_0 \neq j_1$, then we first reconfigure $S_i \cup S_i'$ to $(S_i \cup S_{i+1}) \cup (S_i' - V_{j_1})$, and then using that $\recongraph(H - V_{j_0} - V_{j_1}, \cV - \{V_{j_0}, V_{j_1}\})$ is connected, we find a reconfiguration sequence of IT's of $(G \cup H, \cW)$ from $(S_i \cup S_{i+1}) \cup (S_i' - V_{j_1})$ to $(S_i \cup S_{i+1}) \cup (S_{i+1}' - V_{j_0})$, concluding by reconfiguring the latter to $S_{i+1} \cup S_{i+1}'$. Finally, a similar argument can be used to find a reconfiguration sequence of IT's of $(G \cup H, \cW)$ from $S$ to $S_0 \cup S_0'$, and from $S_N \cup S_N'$ to $T$. 
	
For claim (iii), assuming that $\recongraph(G - U, \cU - \{U\})$ is connected for all $U \in \cU$, we wish to show that $\recongraph((G \cup H) - W, \cW - \{W\})$ is connected for all $W \in \cW$. First note that in the case $W = U_j$ for some $1 \le j \le m - 1$, this claim follows from claim (ii), where $(G, \cU)$ is replaced by $(G - U_j, \cU - \{U_j\})$. So instead assume that $W = V_j'$ for some $1 \le j \le n$. Consider any two IT's $S, T$ of $((G \cup H) - V_j', \cW - \{V_j'\})$. Using that $(H, \cV)$ is minimally NIT, fix any IT $S'$ of $(H - V_j, \cV - \{V_j\})$. We find a reconfiguration sequence of IT's of $\recongraph((G \cup H) - V_j', \cW - \{V_j'\})$ first from $S$ to $(S - U_m) \cup S'$, then from $(S - U_m) \cup S'$ to $(T - U_m) \cup S'$, and finally from $(T - U_m) \cup S'$ to $T$. For the second part, we simply use the assumption $\recongraph(G - U_m, \cU - \{U_m\})$ is connected. For the first part, suppose that $S \cap U_m$ consists of vertices from the blocks $V_{j_1}', \ldots, V_{j_k}'$ (possibly $S \cap U_m = \emptyset$). Using that $\recongraph(H - V_j - V_{j_1}' - \cdots - V_{j_k}', \cV - \{V_j, V_{j_1}', \ldots V_{j_k}'\})$ is connected, we can find a reconfiguration sequence of IT's of $((G \cup H) - V_j', \cW - \{V_j'\})$ from $S$ to $(S \cap V(G)) \cup (S' - V_{j_1} - \cdots - V_{j_k})$, and then to $S \cup S'$. The third part is analogous to the first part.
	
For claim (iv), assuming that $\recongraph((G \cup H) - W, \cW - \{W\})$ is connected for all $W \in \cW$, we wish to show that $\recongraph(G - U, \cU - \{U\})$ is connected for all $U \in \cU$. First note that if $U = U_j$ for some $1 \le j \le m - 1$, then the claim follows from claim (i), where $(G, \cU)$ is replaced by $(G - U_j, \cU - \{U_j\})$. So instead assume that $U = U_m$. Consider any two IT's $S, T$ of $(G, \cU)$. These can be extended to IT's of $((G \cup H) - V_n', \cW - \{V_n'\})$, say $S_0$ and $T_0$ respectively. Since $\recongraph((G \cup H) - V_n', \cW - \{V_n'\})$ is connected, we can find a reconfiguration sequence of IT's of $((G \cup H) - V_n', \cW - \{V_n'\})$ from $S_0$ to $T_0$, say $S_0, S_1, S_2, \ldots, S_{N-1}, S_N = T_0$. Then the sequence $S = (S_0 \cap V(G)) - U_m, (S_1 \cap V(G)) - U_m, \ldots, (S_{N-1} \cap V(G)) - U_m, (S_N \cap V(G)) - U_m = T$ forms a reconfiguration sequence of IT's of $(G - U_m, \cU - \{U_m\})$ from $S$ to $T$, after collapsing consecutive copies of IT's.
\end{proof}

\begin{proof}[Proof of Lemma \ref{lem:minimal-non-reconfigurable}]
Let $(G, \cU)$ be an elementary non-reconfigurable instance, with $G$ a disjoint union of $m$ complete bipartite graphs and $\cU = \{U_1, \ldots, U_m\}$ a partition of $V(G)$. Applying the definition, say that block $U_i \in \cU$ is associated with component $K_{A_i,B_i}$ of $G$ with $A_i \subseteq U_i$. First notice that $\recongraph(G, \cU)$ is nonempty because we can form an IT $S$ of $(G,\cU)$ by choosing one verex from each of $A_1, \ldots, A_n$. Now we show that $\recongraph(G, \cU)$ is disconnected. Observe that the connected component of $\recongraph(G,\cU)$ containing $S$ consists only of IT's of $(G,\cU)$ formed by choosing one vertex from each of $A_1, \ldots, A_n$. Thus, it suffices to find another IT $T$ of $(G, \cU)$ which contains a vertex from $U_i - A_i$ for some $i \in [m]$. We use the following algorithm.

\begin{algorithm}\caption{Constructing another IT of a minimally non-reconfigurable instance} \label{alg:find-another}
\begin{enumerate}
	\item Initiate $G' \leftarrow G$, $U_i' \leftarrow U_i$ for all $i \in [m]$, $I \leftarrow [m]$, and $T \leftarrow \emptyset$.
	\item Initiate $J \leftarrow \{i \in I : U_i' = A_i\}$. 
	\item While $J \neq \emptyset$:
	\begin{itemize}
		\item[a.] Choose any $u_i \in U_{i}'$ for all $i \in J$, and update $T \leftarrow T \cup \{u_{i} : i \in J\}$.
		\item[b.] Update $G' \leftarrow G' - \bigcup_{i \in J} K_{A_i, B_i}$, $U_i' \leftarrow U_i' \cap V(G')$ for all $i \in I - J$, and $I \leftarrow I - J$.
		\item[d.] Update $J \leftarrow \{i \in I : U_i' = A_i\}$.
	\end{itemize}
	\item Choose any $u_i \in U_i' - A_i$ for all $i \in I$, 
	\item Update $T \leftarrow T \cup \{u_i : i \in I\}$, and return $T$.
\end{enumerate}
\end{algorithm}
	
Notice that Algorithm \ref{alg:find-another} terminates because the cardinality of $I$ decreases after each loop, so eventually $J = \emptyset$. Notice also that all of the $U_i'$ remain nonempty after each iteration, since we start with an elementary non-reconfigurable instance. We claim that the output $T$ of the algorithm is an IT of $(G, \cU)$. Assume that at the start of an iteration, $T$ is an IT of $(G - G', \{U_i' : i \in [m] - I\})$, where $G' = \bigcup_{i \in [m] - I} K_{A_i, B_i}$. If $J = \emptyset$, then we update $T$ by taking a union with an IT of $(G', \{U_i' : i \in I\})$, thus making $T$ an IT of $(G, \cU)$. Otherwise, if $J \neq \emptyset$, then we update $T$ by taking a union with a transversal of $\{U_i' : i \in J\}$, which is necessarily independent. Thus, with the updated $G'$, $U_i'$, and $I$, this makes $T$ an IT of $(G - G', \{U_i' : i \in [m] - I\})$, as required.
	
Now we claim that $T$ contains a vertex from $U_i - A_i$ for some $i \in [m]$. This comes from noticing that it is impossible that $J = I$ in any given iteration (since all of the $B_i$'s are nonempty), so there will be an iteration where $I \neq \emptyset$ and $J = \emptyset$. After this last iteration, $T$ will be appended with a vertex $u_i \in U_i - A_i$ for all $i \in I$. This finishes the proof.
\end{proof}

\end{document}